\newcommand\mytextbf[1]{{\textbf {\fontfamily{LinuxBiolinumT-OsF}\selectfont #1}}}
\let\subsubsection\subsection
\newcommand{\bothcite}[2][0cm]{\citep{#2}\cite[#1]{#2}}
\title[GP I.1] 
{Graph Puzzles I.1:\\
Oriented Berge--Fulkerson Conjecture} 
\date{}
\begin{document}

\selectlanguage{english}

\maketitle 

Nikolay Ulyanov\marginnote{ulyanick@gmail.com}

\begin{abstract}

\noindent

\newthought{Abstract: }The Berge--Fulkerson conjecture states that every bridgeless cubic graph can be covered with six perfect matchings such that each edge is covered exactly twice. An equivalent reformulation is that it's possible to find a 6-cycle 4-cover. In this paper we discuss the oriented version (o6c4c) of the latter statement, pose it as a conjecture and prove it for the family of Isaacs flower snarks.

Similarly to the case of oriented cycle double cover, we can always construct an orientable surface (possibly with boundary) from an o6c4c solution. If the o6c4c solution itself splits into two (not necessarily oriented) cycle double covers, then it's also possible to build another pair of orientable surfaces (also possibly with boundaries). Finally we show how to build a ribbon graph, and for some special o6c4c cases we show that this ribbon graph corresponds to an oriented 6-cycle double cover.

\end{abstract}

\noindent\rule{5in}{0.4pt}

\begin{figure*}
  \checkoddpage \ifoddpage \forcerectofloat \else \forceversofloat \fi
  \includegraphics[width=0.93\linewidth]{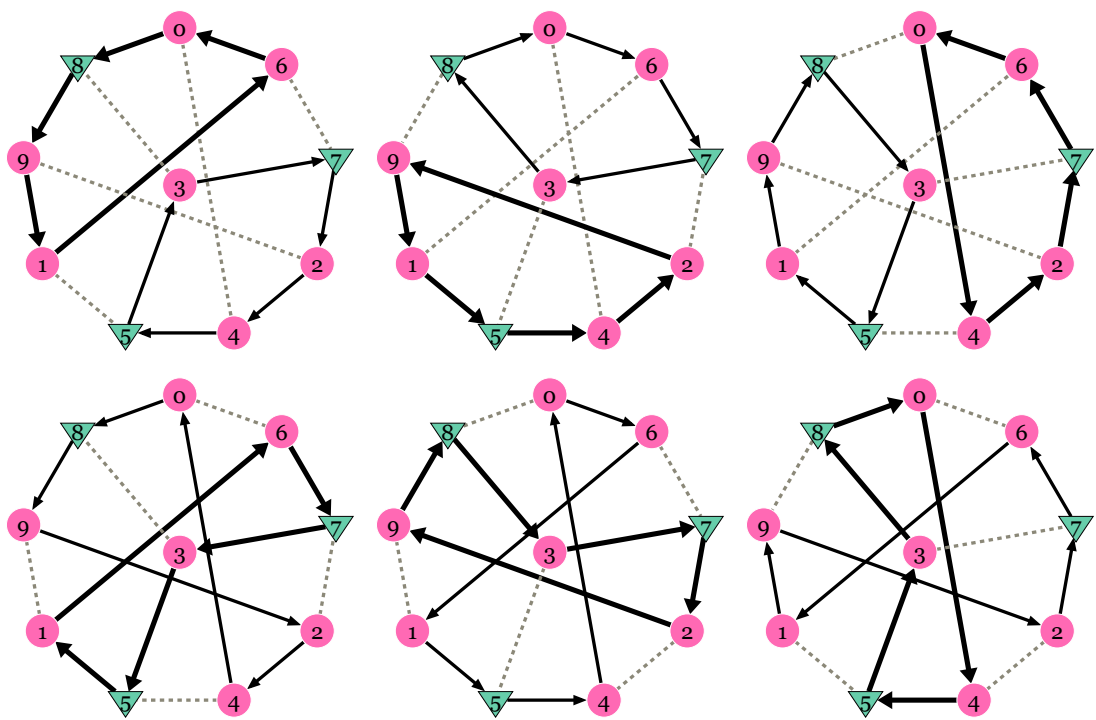}
  \caption{Solution for Petersen graph}
  \label{fig:petersen}
\end{figure*}

\pagebreak

\begin{tcolorbox}[
    breakable,
    toggle enlargement=evenpage,
    rounded corners, 
    boxrule=1pt, 
    width=1.2\linewidth, 
]
    \csname @starttoc\endcsname{toc}
\end{tcolorbox}

\section{Introduction}\label{sec:Introduction}

\subsection{tl;dr}
In this paper we would like to introduce and explore the oriented Berge--Fulkerson conjecture:

\begin{restatable}[o6c4c, oriented 6-cycle 4-cover]{conj}{obf}
Every bridgeless graph has a collection of 6 cycles, in which every cycle can be oriented in such a way that each edge is covered exactly by 4 different cycles: 2 times in one direction, and 2 times in an opposite direction.
\end{restatable}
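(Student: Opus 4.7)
The plan is to first reduce to cubic bridgeless graphs, then reformulate the $2+2$ condition as a local system of parity/orientation constraints, and finally to attack specific families rather than the general statement, since the conjecture is strictly stronger than the still-open Berge--Fulkerson conjecture and is unlikely to yield to a single unified argument. For the reduction, vertices of degree greater than three can be split while preserving bridgelessness, and degree-two vertices suppressed, so it suffices to treat cubic bridgeless graphs; as usual, the real content lies in the snark case.

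Next, I would reformulate the orientation condition locally. At a cubic vertex $v$ incident to edges $e_1, e_2, e_3$, any cycle through $v$ must enter on one incident edge and leave on another, so the requirement is exactly that each of the six chosen even subgraphs admits an Eulerian orientation of its circuit components and that, once these orientations are assembled, every edge receives two orientations in one direction and two in the other. Starting from a $6$-cycle $4$-cover predicted by Berge--Fulkerson (obtained by pairing the six perfect matchings of a cubic bridgeless graph), one is left with a combinatorial system of sign assignments, one per circuit in each of the six sheets, coupled globally by the edge-balance constraint.

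For a concrete target family such as the Isaacs flower snarks $J_{2k+1}$, I would exploit the cyclic symmetry permuting the petals: fix an o6c4c on the smallest member $J_5$ by direct construction, then describe the six cycles parametrically as unions of orbits under the rotation, and record the orientation on each petal as a periodic pattern. Verification of the $2+2$ condition should then reduce to a finite check on a single petal together with the gluing between two consecutive petals, likely after a short case split according to the residue of $k$ modulo some small integer.

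The main obstacle is not the check on any specific graph but the globality of the orientation constraint: local choices in one region can force incompatibilities arbitrarily far away, and there is no obvious local-to-global principle. Flower snarks are tractable precisely because their rotational structure localizes the constraints to neighboring petals; for the conjecture in full generality I would expect a proof to require either a new reformulation (for instance via nowhere-zero flows or $T$-joins that encode the orientation balance) or a substantial extension of whatever machinery ultimately resolves Berge--Fulkerson.
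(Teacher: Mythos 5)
The statement you were given is a conjecture, and the paper does not prove it in general; it poses it, verifies it computationally for small snarks, and then proves it for the infinite family of Isaacs flower snarks. You correctly recognize this: your ``proof'' is really a reduction-plus-plan, and you honestly flag that the full conjecture is out of reach. That framing matches the paper's. Your preliminary reductions (pass to cubic bridgeless graphs, restrict attention to the snark case, reformulate the constraint as consistent Eulerian orientations of the six even subgraphs) are standard and agree with the paper's implicit setup.

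Where you genuinely diverge is in the flower-snark case, which is the only place a proof is actually delivered. The paper does not build a rotationally periodic solution. Instead it imports the $K_{3,3}^*$-reduction machinery of H\"agglund and Steffen: $J_{2k+3}$ is $K_{3,3}^*$-reducible to $J_{2k+1}$, so the construction proceeds by induction, adding two petals at a time and only modifying the cover locally on the three low-index petals that form the $K_{3,3}^*$ subgraph, leaving everything around $a_4,\dots,d_5$ untouched. The base cases are $J_3$ (Tietze's graph) and $J_5$, handled explicitly; the paper also carries the extra structural property that every $b_i$ vertex is ordered through the induction. Your plan of fixing $J_5$ and then describing the six cycles as orbits under the rotation, with a residue-of-$k$ case split, is a different route. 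It is not obviously wrong, but it has a concrete difficulty you should confront: the rotation group is cyclic of order $2k+1$, which is odd and often prime, so a genuinely periodic pattern with period larger than one cannot tile the petals uniformly; what one actually does in practice is append two petals at a time and repair the seam, which is morally the same inductive step the $K_{3,3}^*$-reduction packages cleanly, with base cases already settled. So your approach, once the details are filled in, would likely converge to the paper's argument rather than provide an independent closed-form description. In short, your reduction and reformulation are fine, your plan for flower snarks is plausible but unexecuted and subtly harder than it looks, and the paper's $K_{3,3}^*$-reduction is the shortcut that makes the induction go through with only two base cases to check.
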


\subsection{Motivation and related conjectures}

Let $G$ be a graph, with vertex set $V(G)$, and edge set $E(G)$. In this paper we assume that all graphs are finite, and without loops\sidenote[][]{Edges which start and end on the same vertex.}. For terminology and notation you may consult with any standard textbook, e.~g. \bothcite{Zhang2012}.

A \textit{cubic} (or \textit{3-regular}, or \textit{trivalent}) graph is one where each vertex has degree 3.

An edge is called a \textit{bridge} if its removal disconnects the graph. If the graph has no bridges, then it is said be \textit{bridgeless}.

A \textit{perfect matching} is a set of edges, such that every vertex in $G$ is incident with exactly one edge in this set. The following classical conjecture is due to C.~Berge and D.~R.~Fulkerson \bothcite{Fulk71}.

\begin{restatable}[Berge, Fulkerson]{conj}{}
Every bridgeless cubic graph has a collection of 6 perfect matchings, which together cover all edges exactly twice.
\end{restatable}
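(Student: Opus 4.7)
The plan is to reduce the statement to the core hard case and then leverage the equivalent cycle-cover reformulation mentioned in the abstract. First I would observe that if a bridgeless cubic graph $G$ is 3-edge-colorable, then its three color classes $M_1,M_2,M_3$ are perfect matchings and the list $(M_1,M_1,M_2,M_2,M_3,M_3)$ covers each edge exactly twice, so the conjecture holds trivially. By standard reductions across bridges and small cyclic edge cuts, which let us glue solutions from smaller pieces, the statement reduces to the class of \emph{snarks}: cyclically 4-edge-connected cubic graphs of girth at least 5 that are not 3-edge-colorable.

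Second, I would pass to the equivalent 6-cycle 4-cover formulation. In a cubic graph, for any perfect matching $M$ the complement $E(G)\setminus M$ is a disjoint union of even cycles, so a Berge--Fulkerson collection $M_1,\dots,M_6$ is the same data as a list of six even subgraphs $C_i := E(G)\setminus M_i$ whose union covers each edge exactly four times. This reformulation is the natural habitat of the present paper and is typically more amenable, since one can manipulate the cycle space, exploit parity arguments, and attempt inductive constructions along a 2-factor (which exists by Petersen's theorem for every bridgeless cubic graph).

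Third, I would try an inductive construction on a minimal snark $G$. Fix a 2-factor $F$ with complementary perfect matching $M_1$; the even cycles of $F$ form a natural skeleton, and one may try to produce the remaining five perfect matchings by toggling along well-chosen alternating cycles, or by propagating local choices around each cycle of $F$ using an auxiliary nowhere-zero flow (Seymour's 6-flow theorem is available as a black box). A reasonable intermediate target would be to first secure the \emph{oriented} version (o6c4c) along the lines of the flower snark argument in this paper, and then peel off the orientation data to recover a Berge--Fulkerson collection.

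The main obstacle is, of course, that the Berge--Fulkerson conjecture has been open since 1971 and every known inductive scheme breaks on general snarks. All current positive results cover only specific infinite families (Petersen, flower snarks, Goldberg snarks, and certain superposition and dot-product constructions), and indeed this paper itself only establishes the \emph{oriented} version for the Isaacs flower snarks. A realistic outcome for the plan above is therefore not a full proof but a proof for a fixed infinite family whose 2-factor structure is rigid enough that the inductive step can be carried out explicitly, together with an attempt to identify structural obstructions that must appear in any potential minimal counterexample.
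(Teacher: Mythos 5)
This statement is an open conjecture, not a theorem of the paper; the paper presents it as \emph{Conjecture} (Berge--Fulkerson) precisely because no proof exists, and it offers none. You correctly identify this at the end of your proposal, so there is no gap in your reasoning so much as a mismatch with the task: there is no ``paper's own proof'' to compare against. Your first two paragraphs (trivial case for 3-edge-colorable cubic graphs, reduction to snarks, and the complementary 2-factor reformulation into a 6-cycle 4-cover) do track the standard reductions, and they also track what this paper actually does, which is to restate Berge--Fulkerson as 6c4c and then strengthen it to the \emph{oriented} version o6c4c, proving the latter only for the infinite family of Isaacs flower snarks via $K_{3,3}^*$-reduction. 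Your third paragraph is a heuristic sketch, not a proof: ``toggling along well-chosen alternating cycles'' and ``propagating local choices \ldots\ using an auxiliary nowhere-zero flow'' are exactly the kinds of inductive schemes you then correctly observe are known to break on general snarks, and no concrete lemma is supplied that would make them work. So the honest summary is: you have not proved the statement, you cannot have (it is open), and you say so; the paper does not prove it either. If the intent was to reproduce what the paper establishes, the right target would have been the theorem that flower snarks admit an o6c4c solution, not the Berge--Fulkerson conjecture itself.
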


Note that the complement of a perfect matching in a cubic graph is a 2-factor (a subgraph, where each vertex has degree 2). 2-factor is also a \textit{cycle}. This term is usually used in this part of graph theory to denote \textit{even subgraphs}, subgraphs where each vertex has even degree (if a cycle is connected, it is also said to be a \textit{circuit}). The Berge--Fulkerson conjecture is equivalent to the following statement:

\begin{restatable}[6c4c]{conj}{}
Every bridgeless cubic graph has a collection of 6 cycles, such that each edge is covered exactly 4 times.
\end{restatable}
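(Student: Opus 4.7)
The plan is to attack the 6c4c statement directly, splitting bridgeless cubic graphs into the 3-edge-colorable case (which is elementary) and the snarks (which is essentially the whole difficulty), and to pursue the snarks by structural induction along small cyclic edge-cuts, reducing the problem to the cyclically 4-edge-connected case.

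For the 3-edge-colorable case I would argue as follows. If $G$ admits a proper 3-edge-coloring, its edge set partitions into three perfect matchings $M_1, M_2, M_3$. Each union $M_i \cup M_j$ with $i \neq j$ is a 2-regular spanning subgraph, hence a cycle in the even-subgraph sense of the paper. Taking the family
\[
  M_1\cup M_2,\; M_1\cup M_2,\; M_1\cup M_3,\; M_1\cup M_3,\; M_2\cup M_3,\; M_2\cup M_3
\]
gives six cycles, and any edge $e \in M_k$ lies in exactly the two 2-factors that contain $M_k$, each taken with multiplicity two, so $e$ is covered $2 \cdot 2 = 4$ times. This disposes of the 3-edge-colorable case completely.

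The substantive case is that of snarks. My plan would be induction on $|V(G)|$ via reductions along cyclic $2$- and $3$-edge-cuts: if $G$ has such a cut, split $G$ into two smaller bridgeless cubic graphs $G_1, G_2$ (after joining dangling edges in a standard way), apply the inductive hypothesis on each side to obtain 6c4c collections, and glue them into a 6c4c of $G$ by matching up the cycle passes across the cut. This reduces the conjecture to cyclically 4-edge-connected snarks, for which I would first test the statement computationally on all snarks up to some reasonable order and then try to exhibit uniform 6c4c constructions on the known infinite families (Isaacs flower snarks, as done later in the paper; then Blanu\v{s}a-type, Goldberg, and Loupekhine families), hoping that a common combinatorial pattern emerges.

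The main obstacle, and the reason this conjecture has remained open, is the gluing step at a cyclic $3$- or $4$-edge-cut: the way six unoriented cycles traverse the cut imposes parity and multiplicity constraints on each side that are not locally additive, so a 6c4c on $G_1$ and a 6c4c on $G_2$ need not combine into one on $G$ no matter how their cycles are labelled. This is precisely where every prior attack on Berge--Fulkerson stalls. In practice I would therefore pivot: rather than proving the bare 6c4c, I would first try to establish the stronger \emph{oriented} version (o6c4c) introduced in this paper, in which each edge is covered twice in each of two directions, because the extra orientation data gives local invariants at a cut that have a genuine chance of matching up under contraction and expansion. The bare 6c4c would then follow as an immediate corollary by forgetting the orientations.
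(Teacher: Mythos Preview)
The statement you are attempting is the Berge--Fulkerson conjecture in its 6c4c formulation; the paper does not prove it and explicitly lists it as an open conjecture. There is therefore no proof in the paper to compare against, only the observation that it is equivalent to the perfect-matching version and that the 3-edge-colourable case is trivial. Your treatment of the 3-edge-colourable case is correct and coincides with what the paper sketches in Section~2.1.

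For snarks, your proposal is a research outline rather than a proof, and you identify the genuine obstruction yourself: the gluing step across a cyclic 2- or 3-edge-cut does not go through, because a 6c4c on each side carries no local data at the cut strong enough to force compatibility. Reducing to cyclically 4-edge-connected snarks and then handling infinite families one by one would establish special cases (as the paper does for flower snarks) but would not settle the conjecture, since there is no classification of such snarks.

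Your final pivot contains a logical misstep. You suggest attacking the stronger oriented version o6c4c because the orientation data ``has a genuine chance of matching up'' across cuts, and then deducing 6c4c by forgetting orientations. But o6c4c is strictly stronger than 6c4c---it is Conjecture~1.1 of this very paper, posed here for the first time and also open. Replacing an open problem by a harder open problem is not progress toward a proof; the extra orientation data may indeed behave better under gluing, but you would still need to prove o6c4c, which nobody knows how to do either. In short, the proposal correctly locates where the difficulty lies but does not supply any new idea for overcoming it.
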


Note that for cubic graphs 6 would be the minimal number of cycles required to cover all edges 4 times. 

With the reformulation above (which doesn't rely on perfect matchings, but only on cycles), it's also possible to consider non-cubic graphs. For them a similar statement is also conjectured (which is similar in strength, because we can reduce non-cubic graphs to cubic ones):

\begin{restatable}[6c4c for non-cubic graphs]{conj}{}
Every bridgeless graph has a collection of 6 cycles, such that each edge is covered exactly 4 times.
\end{restatable}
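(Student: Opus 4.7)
The plan is to derive this statement from the cubic 6c4c conjecture stated just above, via a standard local reduction from bridgeless graphs to bridgeless cubic graphs. Given a bridgeless $G$, I would construct a bridgeless cubic $\widehat{G}$ together with a projection $\pi\colon E(\widehat{G})\to E(G)\cup\{\star\}$ such that any 6-cycle 4-cover of $\widehat{G}$ pushes forward under $\pi$ to a 6-cycle 4-cover of $G$. Bridgelessness rules out degree-1 vertices, so only vertices of degree $2$ and of degree at least $4$ need to be removed.

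First I would suppress every degree-2 vertex: replacing the two edges $uv$ and $vw$ by a single edge $uw$ preserves bridgelessness, and any cover of the simplified graph lifts by placing both $uv$ and $vw$ in precisely those cycles that use $uw$. For a vertex $v$ of degree $d\ge 4$ with incident edges $e_1,\dots,e_d$, the inflation step deletes $v$, introduces a fresh $d$-cycle $C_v = u_1 u_2 \cdots u_d u_1$, and reattaches $e_i$ to $u_i$, so that each $u_i$ has degree $3$. Bridgelessness is preserved because $C_v$ supplies two internally edge-disjoint routes between any $u_i, u_j$, so any pair of edge-disjoint paths in $G$ lifts to $\widehat{G}$. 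After finitely many such operations we obtain a bridgeless cubic $\widehat{G}$, and the previous conjecture supplies a 6-cycle 4-cover $Z_1,\dots,Z_6$ of it.

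The heart of the argument is that each $Z_j$ projects to an even subgraph of $G$: we send the $C_v$-edges to $\star$ and identify each $e_i$ with the corresponding original edge. The only nontrivial check is at each inflated vertex $v$. Since $u_i$ has degree $3$ in $\widehat{G}$, $Z_j$ uses $0$ or $2$ of the three edges at $u_i$; walking around $C_v$, the external edge $e_i$ lies in $Z_j$ exactly when the two $C_v$-edges at $u_i$ differ in their membership in $Z_j$ — that is, when $u_i$ is a parity change along $C_v$. Since the number of parity changes along a closed walk is always even, $Z_j$ meets $\{e_1,\dots,e_d\}$ in an even number of edges, so its projection has even degree at $v$. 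Covering multiplicities on original edges are preserved by construction, so the projections form a 6-cycle 4-cover of $G$.

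The only real obstacle is bookkeeping: handling parallel edges at $v$ (route them to distinct $u_i$), and checking that suppression and inflation can be interleaved without conflict as the reduction is iterated. Both amount to routine verifications of the single-step invariants above, so modulo the cubic 6c4c conjecture itself there is no genuine mathematical difficulty beyond making sure the local gadgets behave correctly under composition.
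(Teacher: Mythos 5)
This statement is posed in the paper as a \emph{conjecture}, not a theorem, so there is no proof in the paper to compare against; the text only offers the parenthetical remark that it is ``similar in strength'' to the cubic 6c4c conjecture ``because we can reduce non-cubic graphs to cubic ones.'' What you have written is precisely a fleshing-out of that remark: a conditional derivation of the non-cubic statement from the cubic one, which you correctly flag as being ``modulo the cubic 6c4c conjecture itself.'' That is the right way to read the paper's claim, and your reduction is essentially the standard one.

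The core of your argument is sound. Suppressing degree-2 vertices and inflating a degree-$d$ vertex $v$ into a $d$-cycle $C_v$ both preserve bridgelessness, and the parity argument showing that each $Z_j$ meets $\{e_1,\dots,e_d\}$ in an even number of edges (counting sign changes along the closed walk $C_v$) is exactly the standard fact that makes the pushforward an even subgraph. Multiplicities on the original edges are preserved by construction, and the trivial converse (a 6c4c for general bridgeless graphs specializes to the cubic case) gives the ``similar in strength'' equivalence the paper asserts. A few bookkeeping points you gesture at but should actually write out if this were a formal proof: (i) suppressing a degree-2 vertex $v$ with two parallel edges $vu, vu$ produces a loop, which the paper disallows, so that case needs a separate patch (e.g.\ delete $v$ and its doubled edge, cover the smaller graph, then reinsert the 2-cycle into any four of the six even subgraphs); (ii) a connected component that is a pure cycle of degree-2 vertices survives suppression and must also be handled directly; (iii) since suppression and inflation commute cleanly (suppress first until no degree-2 vertices remain, then inflate once), the ``interleaving'' worry you raise actually disappears. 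None of these is a mathematical obstacle, but they are exactly the kind of local-gadget details the paper waves past, and you are right to call them out.
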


J.~C.~Bermond, B.~Jackson and F.~Jaeger \bothcite[-5cm]{BJJ} has proved that every bridgeless graph has a collection of 7 cycles, that cover each edge exactly 4 times.

There is another classical conjecture, known as \textit{cycle double cover conjecture}, attributed to W.~T.~Tutte\sidenote[][-3.5cm]{No paper reference here.}, A.~Itai and M.~Rodeh \bothcite[-2.5cm]{ItaiRodeh}, G.~Szekeres \bothcite{Szekeres1973} and P.~Seymour \bothcite{Seymour1979} (but also see the book by C.-Q.~Zhang \bothcite{Zhang2012} for some more details about its origin):

\begin{restatable}[cdc]{conj}{}
Every bridgeless (not necessarily cubic) graph has a collection of cycles, such that each edge is covered exactly 2 times.
\end{restatable}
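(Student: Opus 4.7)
The plan is to first reduce the conjecture to the case of cubic bridgeless graphs. Given an arbitrary bridgeless graph, any vertex of degree greater than three can be expanded by a standard vertex-splitting operation, and any vertex of degree two can be suppressed; both operations preserve bridgelessness and a CDC on the modified graph pulls back to a CDC on the original. So it suffices to handle cubic bridgeless graphs.

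Next I would reduce from cubic bridgeless graphs to snarks. If the cubic graph $G$ admits a proper 3-edge-coloring, then the three color classes are perfect matchings $M_1,M_2,M_3$, and the three 2-factors $M_i\cup M_j$ (for $i\neq j$) already form a CDC: each edge of $M_k$ lies in exactly the two 2-factors $M_i\cup M_k$ and $M_j\cup M_k$. Hence only cubic bridgeless graphs without a 3-edge-coloring remain, and further standard reductions let us restrict to cyclically 4-edge-connected snarks of girth at least five.

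For snarks my strategy would be to derive a CDC from a 6c4c solution, assuming the Berge--Fulkerson conjecture (the natural companion discussed above). Concretely, one would like to show that the six cycles $C_1,\dots,C_6$ of a 6c4c can always be partitioned into two triples $\{C_{i_1},C_{i_2},C_{i_3}\}$ and $\{C_{j_1},C_{j_2},C_{j_3}\}$ such that within each triple every edge is covered exactly twice. This is a combinatorial feasibility question among the $\binom{6}{3}=20$ candidate triples, and parity and cover constraints could in principle force a valid split to exist. An alternative, more geometric route is suggested by the constructions in the abstract of this paper: an oriented CDC corresponds to a cellular embedding into an orientable surface, so one could try to read off such an embedding from the ribbon graph that the paper builds out of an o6c4c solution.

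The main obstacle is, of course, that the cycle double cover conjecture has been open since the 1970s and is widely regarded as one of the hardest problems in structural graph theory. Each of the reduction steps hides a genuine difficulty: restricting to snarks does not actually lower the combinatorial complexity, splitting a 6c4c into two CDCs is essentially a separate conjecture comparable in strength to the original, and the topological route reduces matters to exhibiting a polyhedral embedding, which is once again equivalent to (a strong form of) the target statement. The hope would be that the oriented 6-cycle 4-cover framework together with the ribbon graph and surface machinery developed in the present paper eventually provides enough extra structure to make the splitting argument tractable, at least on restricted families such as the Isaacs flower snarks already handled here.
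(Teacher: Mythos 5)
This statement is a conjecture in the paper (the Cycle Double Cover Conjecture), and the paper offers no proof of it: it is cited as a long-standing open problem attributed to Tutte, Itai--Rodeh, Szekeres and Seymour, and then used as background motivation for the oriented variants the paper actually studies. So there is no ``paper's own proof'' to compare against.

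Your write-up is, correctly, not a proof either, and you are candid about that. The preliminary reduction steps you describe are standard and sound: a CDC on the cubic graph obtained by splitting high-degree vertices and suppressing degree-two vertices pulls back to a CDC on the original bridgeless graph, and the 3-edge-colourable cubic case follows immediately from the three 2-factors $M_i\cup M_j$. The genuine gap is exactly where you point to it: it is not true that every 6c4c solution splits into two cycle double covers. The paper itself says explicitly that this split only ``sometimes'' exists, and indeed an arbitrary triple of cycles in a 6c4c need not cover every edge exactly twice, so there is nothing that forces one of the $\binom{6}{3}$ triples to work. More fundamentally, Berge--Fulkerson (which you would also be assuming) is not known to imply CDC; they are independent conjectures in the current state of knowledge. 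The ribbon-graph/surface route is subject to the same difficulty: producing an orientable embedding whose faces are circuits is equivalent to producing an oriented CDC, which is at least as strong as the target. In short, each branch of your plan reduces CDC to another open conjecture rather than to something tractable, which you yourself acknowledge in your final paragraph.
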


It's also possible to consider cycles with \textit{orientations}. The idea would be then to cover each edge in opposite directions. The following conjecture (oriented/orientable/directed cycle double cover) is due to F.~Jaeger \bothcite{Jaeger1985Survey}:

\begin{restatable}[ocdc]{conj}{}
Every bridgeless graph has a cycle double cover, in which every cycle can be oriented in such a way that each edge is covered exactly by 2 different cycles in 2 different directions.
\end{restatable}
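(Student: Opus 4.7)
The plan is to reduce the statement to the cubic case and then split on whether the cubic graph is 3-edge-colorable. For a general bridgeless graph $G$, suppress degree-two vertices and replace each vertex of degree $d > 3$ by a cubic gadget---for instance, a small cycle on $d$ vertices with the incident edges of $G$ made pendant---to obtain a cubic bridgeless graph $G'$. An ocdc of $G'$ pushes down to an ocdc of $G$ by contracting the inserted cycles while preserving the orientations of the covering cycles. This reduction is routine, so the real content lies entirely in the cubic case.

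For cubic $G$ admitting a proper 3-edge-coloring with color classes $M_1, M_2, M_3$, form the three even 2-factors $C_{ij} = M_i \cup M_j$. Each $C_{ij}$ is a disjoint union of even circuits, and because the graph carries a nowhere-zero 4-flow (equivalently, a nowhere-zero $\mathbb{Z}_2 \times \mathbb{Z}_2$-flow) one can jointly orient the three 2-factors so that every edge, which lies in exactly two of them, is traversed once in each direction. This handles all 3-edge-colorable cubic graphs, and it is an instance of Jaeger's general observation that a nowhere-zero 4-flow always yields an ocdc.

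The essential remaining case is that of bridgeless cubic graphs without a 3-edge-coloring---the snarks. Here I would attempt to derive an ocdc from an o6c4c, the existence of which is the central conjecture of the paper. Given six oriented cycles covering each edge four times (twice in each direction), the goal is to select or merge them into three cycles that cover every edge exactly twice in opposite directions. The ribbon-graph construction sketched in the abstract assigns a local rotation at each vertex, and the question becomes whether these rotations can be chosen globally to produce an orientable strong embedding, whose faces would then form the desired ocdc. An induction on the number of vertices whose local rotation is not yet compatible with orientability would then close the argument.

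The main obstacle is precisely this last step. As the paper itself indicates, the ribbon graph coming from an o6c4c corresponds to an ocdc only in special cases, and reducing the general situation to these cases is essentially equivalent to the strong orientable embedding conjecture for snarks, which is itself open. Even for the Isaacs flower snarks---for which o6c4c is established in the paper---verifying orientability of the resulting ribbon graph requires a careful vertex-by-vertex analysis, and any general argument would probably need a new structural insight into how the combinatorial choices around each snark vertex interact with o6c4c solutions.
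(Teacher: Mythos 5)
This statement is a \emph{conjecture} in the paper --- the oriented cycle double cover conjecture due to Jaeger --- and the paper offers no proof of it (nor could it, since the conjecture remains open). Your write-up is honest about this in its final paragraph, but it is still worth flagging that what you have produced is not a proof, and also that one concrete step you assert in the 3-edge-colorable case is false.

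The faulty step is the claim that for a 3-edge-colorable cubic graph you can ``jointly orient the three 2-factors so that every edge, which lies in exactly two of them, is traversed once in each direction.'' This fails already for $K_4$: with color classes $M_1=\{12,34\}$, $M_2=\{13,24\}$, $M_3=\{14,23\}$ the three 2-factors are the 4-circuits $1\text{-}3\text{-}2\text{-}4\text{-}1$, $1\text{-}2\text{-}3\text{-}4\text{-}1$, and $1\text{-}2\text{-}4\text{-}3\text{-}1$, and a short case check shows the orientation constraints coming from edges $14$ and $23$ are incompatible, so no choice of orientations of these three circuits gives an oriented double cover. This is exactly why the standard statement (echoed in the paper's remark on triples) is that a 3-cycle double cover is equivalent to an \emph{oriented 4-cycle} double cover, not to an oriented 3-cycle double cover. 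The correct route from a nowhere-zero 4-flow to an ocdc uses four cycles, not three, so your $C_{ij}$-orientation argument must be replaced by the genuine nz4-flow $\Rightarrow$ o4cdc construction.

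The snark case is where the real difficulty lives, and here your proposal does not close the gap, by your own admission. It is also worth noting that o6c4c does not obviously imply ocdc: the paper only produces an oriented 6-cycle double cover from an o6c4c under the extra hypothesis that \emph{all} vertices are disordered, and that hypothesis fails already for the Petersen graph and for the flower snarks (where the $b_i$ are ordered). So even a proof of the o6c4c conjecture for all snarks would not, via the ribbon-graph construction in this paper, yield an ocdc in general. The reduction-to-cubic step should also be handled with more care (one must check that the gadget replacement preserves bridgelessness and that contracting gadget circuits keeps the covering a double cover), but that is a secondary concern compared to the two issues above.
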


In the recent years Berge--Fulkerson conjecture has been proven for several families of snarks. Nevertheless, to our knowledge, the oriented version of Berge--Fulkerson conjecture hasn't been considered in the literature before. The closest we could find is a page on the Open Problem Garden website, posted by M.~DeVos \bothcite{OPGBF}\bothcite{OPGMNCC}, which mentions a weaker version with 8 cycles. However, we think that 6 cycles is enough for bridgeless graphs, even in the oriented version:

\obf*

\subsection{Main contribution}

In this paper:
\begin{itemize}
\item We explore the notion of oriented 6-cycle 4-cover with a number of examples;
\item Prove existence of oriented 6-cycle 4-cover for an infinite family of Isaacs flower snarks;
\item Show how to glue the covering into an orientable surface, possibly with boundary;
\item In cases where the oriented 6-cycle 4-cover could be split into two 6-cycle double covers, we show a different construction of a pair of orientable surfaces, also possibly with boundaries;
\item In some other special cases, where all vertices are \textit{"disordered"} (we will define this later in the paper), we show how to build an oriented 6-cycle double cover;
\item And discuss some miscellaneous stuff around the topic.
\end{itemize}

\subsection{Note about the paper series title}

This is the first paper in the series, where we explore various graph decomposition conjectures, with the aid of computations, and additionally releasing all of the code as open source. The current plan is to cover a wide range of conjectures in graph theory, starting from famous conjectures about snarks, and later exploring other graph decomposition areas, such as:

\begin{itemize}
  \item Conjectures related to snarks (Cycle double cover conjecture, Berge--Fulkerson conjecture, Petersen colouring conjecture, Unit vector flows, etc.);
  \item Graph labeling conjectures (Kotzig--Ringel--Rosa conjecture aka Graceful Tree conjecture, harmonious labeling conjecture, edge-graceful labeling conjecture);
  \item Various versions of Graham--Häggkvist conjecture about decompositions of regular graphs into trees;
  \item Tree packing conjecture.
\end{itemize}

The original motivation behind this project has been not to prove any of the conjectures in particular, but to explore the algorithms for constructing the solutions, and trying to push the limits of the statements of the conjectures, until they eventually break. The project started around 2013, and originally it was intended to have an "experimental mathematics" flavour, but now it also includes rigorous mathematical results.

\section{o6c4c panorama}

\subsection{o6c4c for 3-edge-colourable cubic graphs}
As is well-known (e.~g. by using Vizing's theorem), the cubic graphs can be divided into 2 classes. Class~1 cubic graphs have edge chromatic number 3, and they are also known as \textit{3-edge-colourable} graphs. Class~2 cubic graphs have edge chromatic number 4, and will be considered in further sections.

Let's consider a 3-edge-colourable graph. We can form 3 cycles, each containing edges from 2 distinct colours, so that:
\begin{itemize}
  \item First cycle includes edges of colours 1 and 2;
  \item Second cycle includes edges of colours 1 and 3;
  \item Third cycle includes edges of colours 2 and 3.
 \end{itemize}

It's easy to see that this set of cycles produces a 3-cycle double cover, or a 6-cycle 4-cover, if we duplicate the cycles. Now, if we take these 3 cycles, give them some arbitrary orientation, and duplicate them with same cycles with opposite orientation, we will produce a solution to the oriented Berge--Fulkerson conjecture.

\subsection{o6c4c for Petersen graph}
\label{pet_o6c4c}

As it usually turns out, the more interesting class of graphs to consider is the non-3-edge-colourable cubic graphs. Petersen graph is the smallest and presumably the most famous class 2 cubic graph.\sidenote[][-0.5cm]{Technically it's a \textit{snark}, see next subsection.}

Berge--Fulkerson solution for the Petersen graph consists of all 6 distinct perfect matchings, or equivalently, of all 6 distinct 2-factors. An oriented solution is visualized in Fig.~\ref{fig:petersen} on title page. All other o6c4c solutions for Petersen graph are isomorphic to the depicted solution.

We would like to note several interesting properties of this solution:

\begin{itemize}
  \item There are 3~vertices (with indices 5,~7~and~8 in the Fig.~\ref{fig:petersen}) that look different locally than the other 7~vertices. We will denote them as \textit{ordered} vertices, while the others will be called \textit{disordered} vertices, see Fig.~\ref{fig:vertices}. The difference is that ordered vertices have same pairs of edges oriented the same way, while disordered vertices have opposite orientations for same pairs of edges.

\begin{figure}
  \checkoddpage \ifoddpage \forcerectofloat \else \forceversofloat \fi
  \centering
  \subfloat[Ordered vertex]{\includegraphics[width=0.5\linewidth]{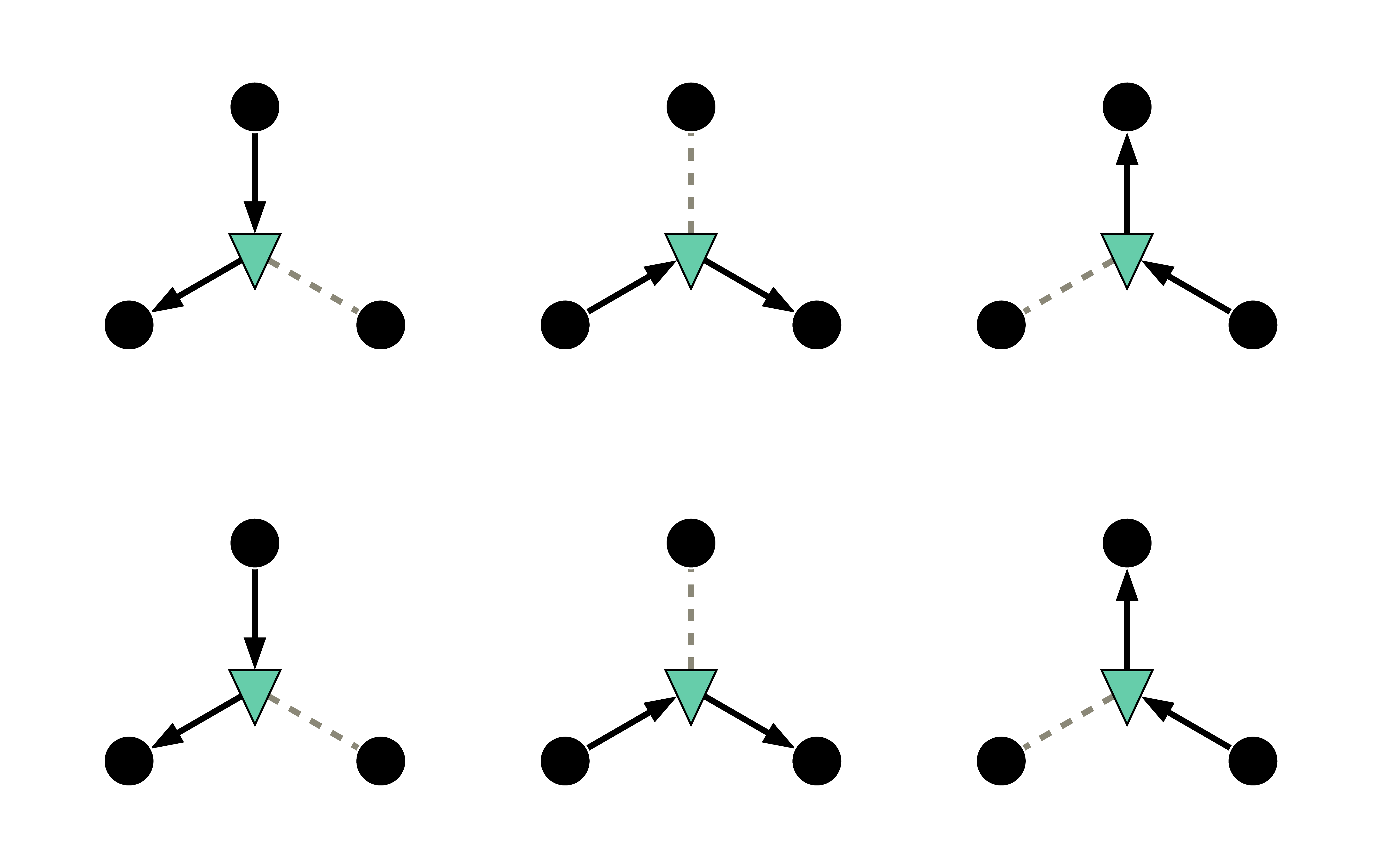}}
  \subfloat[Disordered vertex]{\includegraphics[width=0.5\linewidth]{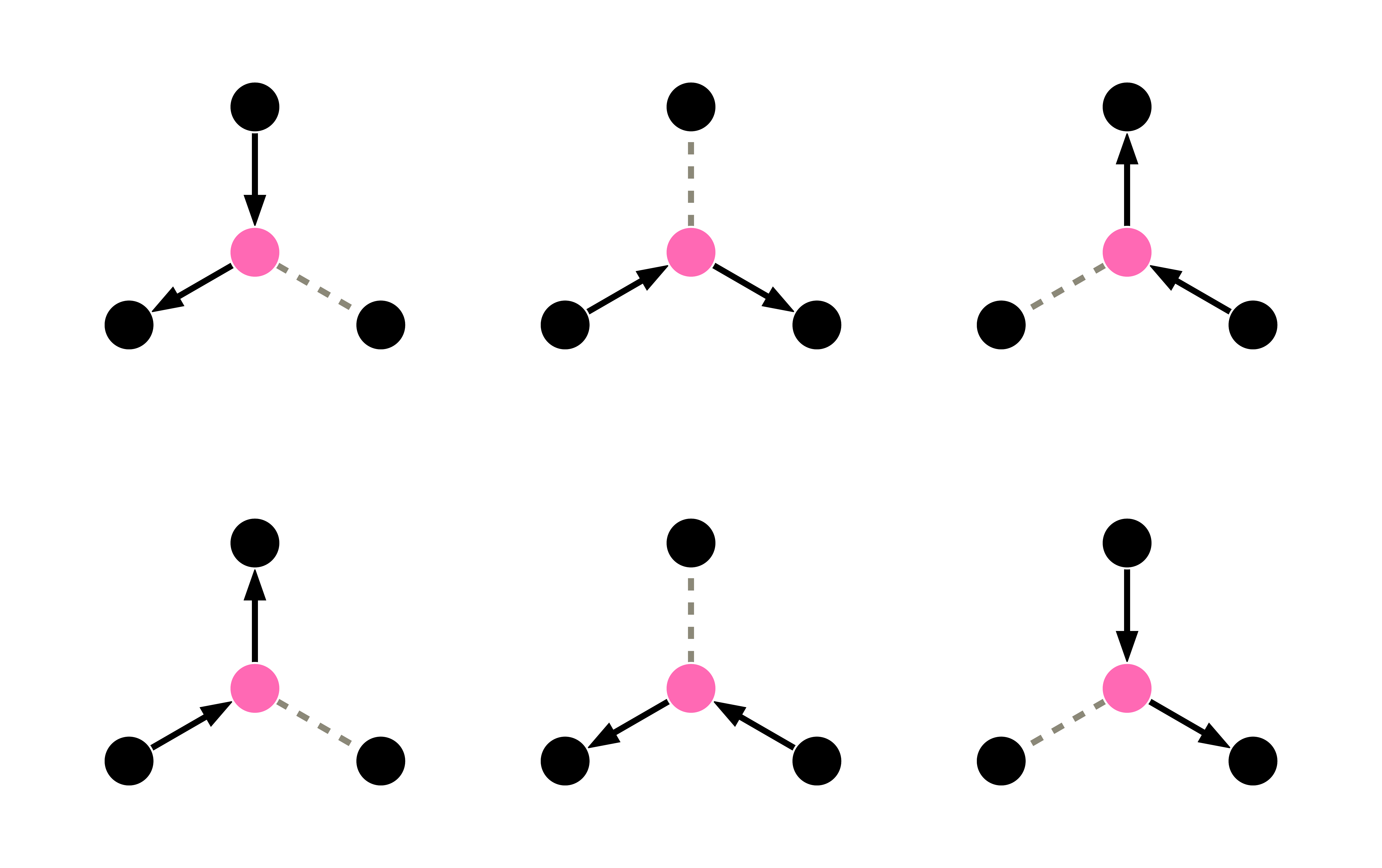}}
  \caption{2 types of vertices}
  \label{fig:vertices}
  \setfloatalignment{b}
\end{figure}

  \item The same solution has actually already appeared in the literature several times, although in different guises, which we will discuss in a later section.
  
  \item This o6c4c solution has an additional property that the highlighted bold subset of cycles forms a 6-cycle double cover (so the whole o6c4c actually splits into two 6-cycle double covers), we will return to this case again in a later section about the pair of orientable surfaces.
\end{itemize}

\subsection{o6c4c for small snarks}

There's a special subset of class 2 cubic graphs called \textit{snarks}. Exact definition is not super important for this paper, but for precision of the next sentence, we clarify that snarks here have girth $\ge 5$ and cycle connectivity $\ge 4$.

We have computationally verified, that o6c4c solutions exist for all snarks with 30 vertices or less, by brute-force search on the database shared by House of Graphs \citep{HoG}\sidenote[][-1.5cm]{\url{https://houseofgraphs.org/meta-directory/snarks}}. The code is released in the author's GitHub\sidenote[][-0.5cm]{\url{https://github.com/gexahedron/cycle-double-covers}}.

\subsection{o6c4c for Isaacs flower snarks, and almost strong Petersen colouring}

In this section we will show a construction of o6c4c solutions for flower snarks, based on the \bothcite{HagglundSteffen} paper, in which the authors explore the notion of Petersen colouring (which implies both 6c4c and 5cdc). They also explore the notion of \textit{strong Petersen colouring}, which was introduced in \bothcite{Jaeger1985PC}; when it exists for the graph it implies both o6c4c and o5cdc solutions. However, it seems that very rare snarks have this property (e. g., only 8 among 3247 snarks with number of vertices $\le 28$ have this colouring), and the authors also proved that flower snarks don't have this property. The authors also prove that flower snarks have non-strong Petersen colouring using $K_{3,3}^*$-reduction technique.

Flower snarks were discovered by R.~Isaacs in \bothcite{Isaacs}. For odd $k \ge 1$ the flower snark $J_{2k+1}$ has vertex set
$$V\left(J_{2k+1}\right) = \left\{a_i, b_i, c_i, d_i \middle| i = 0, 1,\ldots, 2k\right\},$$
and edge set
$$E\left(J_{2k+1}\right) = \left\{a_i b_i, b_i c_i, b_i d_i, a_i a_{i+1}, c_i d_{i+1}, d_i c_{i+1} \middle| i = 0, 1,\ldots, 2k\right\},$$
where indices are added modulo $2k+1$.\sidenote[][]{For $k=1$ it's a weak snark with girth $3$, also known as Tietze's graph.}

\begin{restatable}[o6c4c for flower snarks]{thm}{}
For every $k \ge 1$ the flower snark $J_{2k+1}$ has an o6c4c solution, where all $b_i$ vertices are ordered.
\end{restatable}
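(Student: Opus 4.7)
The plan is to construct the o6c4c by pulling back the o6c4c of the Petersen graph $P$ (Fig.~\ref{fig:petersen}) along a suitable Petersen colouring $\phi : E(J_{2k+1}) \to E(P)$. Recall that $P$'s o6c4c has exactly three ordered vertices and seven disordered ones, and that $J_{2k+1}$ admits a Petersen colouring by the $K_{3,3}^*$-reduction of \bothcite{HagglundSteffen}; the three edges at each $v \in V(J_{2k+1})$ are mapped bijectively onto the three edges at some vertex $\pi(v) \in V(P)$. For each of the six Petersen cycles $C_j$ in the o6c4c, set $\tilde C_j := \phi^{-1}(C_j)$; at every $v$ the count of $\tilde C_j$-edges at $v$ equals the count of $C_j$-edges at $\pi(v)$, which is even, so each $\tilde C_j$ is an even subgraph of $J_{2k+1}$ and the family $\{\tilde C_j\}$ is a 6-cycle 4-cover.

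Next, I would arrange that $\pi(b_i)$ is always one of the three ordered vertices of $P$. The $b_i$ are the centres of the $K_{3,3}^*$ gadgets in the reduction, which I expect forces them to play a uniform role in $\phi$; if necessary, one composes $\phi$ with an automorphism of $P$ to align the three ordered vertices of $P$ with the image set $\{\pi(b_0), \pi(b_1), \ldots\}$. Then I would orient each circuit of each $\tilde C_j$ so as to realise, at each $b_i$, the $\phi_{b_i}$-pullback of the $C_j$-orientation at $\pi(b_i)$. Because the ordered-vertex property is local and preserved by the edge-bijection $\phi_{b_i}$, this orientation choice automatically makes every $b_i$ ordered. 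The remaining freedom lies in choosing a direction for each connected circuit of each $\tilde C_j$, which must be used to complete the orientation on the rest of the graph.

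The main obstacle is showing that the orientations required at different $b_i$ lying on a single circuit of some $\tilde C_j$ are mutually consistent, and that the resulting global orientation yields an \emph{o6c4c} (each edge covered twice in each direction) rather than just an orientable 6c4c. Since flower snarks do not admit a fully strong Petersen colouring, some disagreement is inevitable; the goal is to show that it can be absorbed entirely at the $a_i, c_i, d_i$ vertices, which will remain disordered in the final solution. Establishing this requires a block-by-block analysis using the periodicity of $J_{2k+1}$ and of the H\"agglund--Steffen colouring, together with a global parity check around the odd-length outer ring $a_0 a_1 \cdots a_{2k}$ and the twisted inner ring on $\{c_i, d_i\}$, where the snark property of $J_{2k+1}$ originates.
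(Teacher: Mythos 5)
Your proposal takes a genuinely different route from the paper's proof. The paper argues directly by induction on $k$ using the $K_{3,3}^*$-reduction on the \emph{o6c4c data itself}: it exhibits explicit oriented solutions for the base cases $J_3$ (the Petersen solution with a vertex blown up to a triangle) and $J_5$, observes that the two agree on all edges incident to $a_4,b_4,c_4,d_4,a_5,b_5,c_5,d_5$, so that the orientation only needs to be adjusted locally inside the inserted $K_{3,3}^*$ block (vertices with indices $1,2,3$), and then repeats the same local patching to pass from $J_{2k+1}$ to $J_{2k+3}$, checking in each step that the $b_i$ stay ordered. Your plan instead pulls the Petersen o6c4c back along a Petersen colouring and then tries to repair the orientations. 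That is a reasonable idea, and it is close to the ``almost strong Petersen colouring'' viewpoint the paper alludes to, but it does not constitute a proof as written.

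There are two concrete gaps. First, the claim that one can arrange $\pi(b_i)$ to land on one of the three ordered Petersen vertices for \emph{every} $i$ (possibly after composing with an automorphism of $P$) is an unverified structural property of the H\"agglund--Steffen colouring; nothing in your argument establishes it, and the phrase ``I expect forces them to play a uniform role'' is doing the work of a lemma you have not proved. Second and more seriously, you correctly identify the crux --- that orientations imposed at different $b_i$ lying on one circuit of $\tilde C_j$ must be mutually consistent, and that the resulting global orientation must actually cover every edge twice in each direction --- and then explicitly defer both to a ``block-by-block analysis'' and a ``global parity check'' that you do not carry out. Since flower snarks provably do \emph{not} admit a strong Petersen colouring, the naive orientation pullback must fail somewhere, and showing that the failure can be confined to the $a_i,c_i,d_i$ vertices while still yielding a balanced 4-cover is precisely the content of the theorem. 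Without that verification you have at best a 6c4c whose $b_i$-local pictures match the Petersen solution, not an o6c4c with all $b_i$ ordered. The paper's induction avoids this entirely by constructing and checking the orientation explicitly at each $K_{3,3}^*$-insertion step.
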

\begin{proof}
The construction of o6c4c solution also follows quite straightforwardly from the $K_{3,3}^*$-reduction, all we need is to add the orientations appropriately. Because flower snark $J_{2k+3}$ is $K_{3,3}^*$-reducible to $J_{2k+1}$ (see Lemma 4.2 of \bothcite{Steffen}), we only need to consider $J_3$ and $J_5$.

$J_3$ is just a Petersen graph, with a vertex replaced with a triangle, the solution basically stays the same. As for $J_5$ we track down, where each of the vertices is mapped to, and then rebuild the o6c4c solution back from $J_3$ to $J_5$. See figures~\ref{fig:j3vsj5part1} and~\ref{fig:j3vsj5part2} for solution comparison. Note, that all of the edges that are neighbours of $a_4, b_4, c_4, d_4, a_5, b_5, c_5, d_5$ are same both in $J_3$ and $J_5$ in corresponding cycles, we need to change the solution only locally in the vertices with lower value indices $1,2$ and $3$, which actually form the $K_{3,3}^*$ subgraph. It's also easy to check that all $b_i$ vertices are ordered.

\begin{figure*}
  \checkoddpage \ifoddpage \forcerectofloat \else \forceversofloat \fi
  \includegraphics[width=0.95\linewidth]{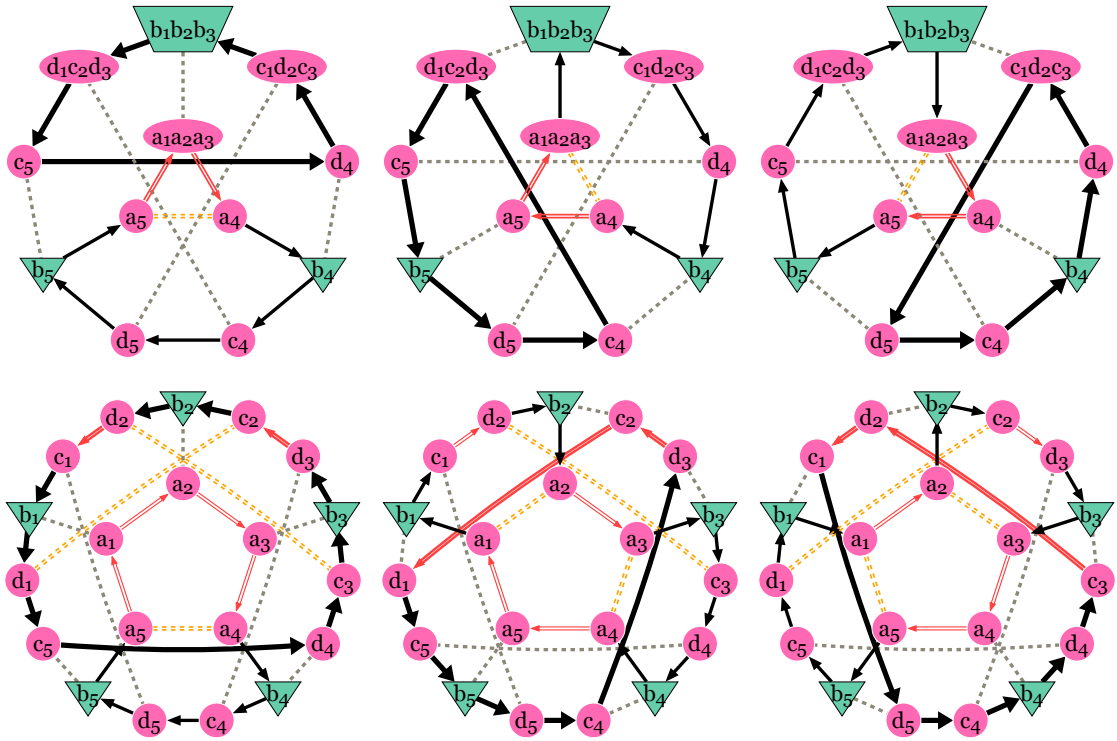}
  \caption{First set of 3 cycles of $J_3$ vs $J_5$}
  \label{fig:j3vsj5part1}
\end{figure*}

\begin{figure*}
  \checkoddpage \ifoddpage \forcerectofloat \else \forceversofloat \fi
  \includegraphics[width=0.95\linewidth]{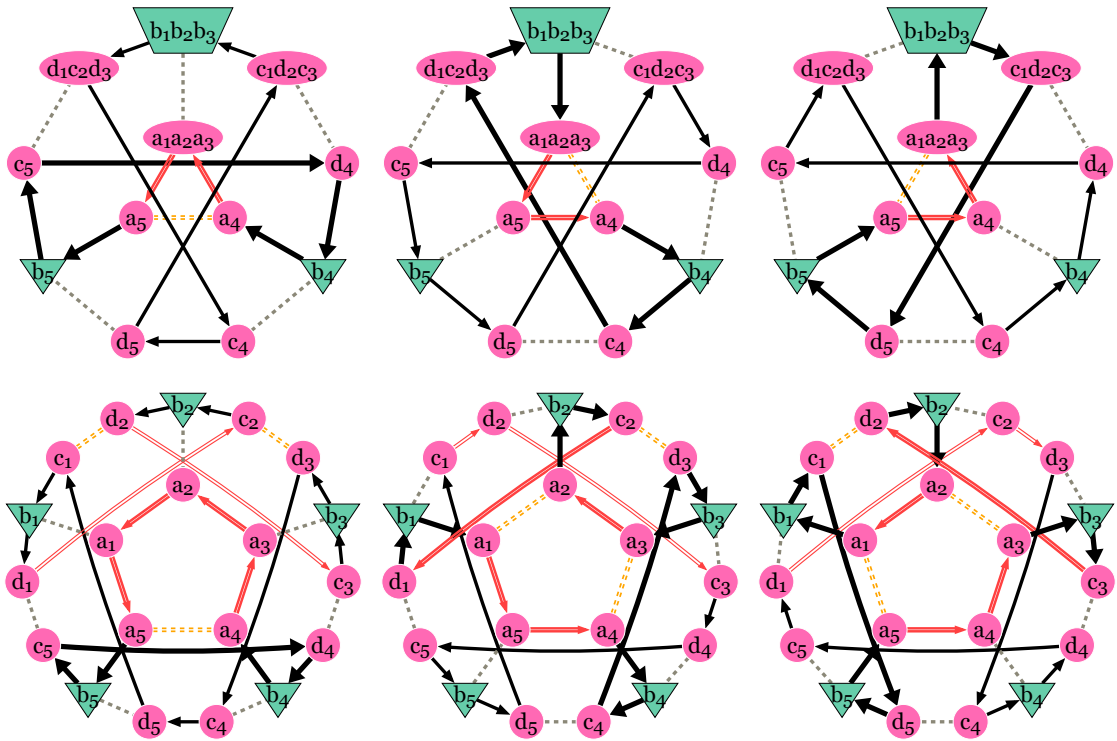}
  \caption{Second set of 3 cycles of $J_3$ vs $J_5$}
  \label{fig:j3vsj5part2}
\end{figure*}

\end{proof}

The solution actually comes from a much more general notion of an \textit{almost strong Petersen colouring}, which we will explore in future paper in the series \bothcite{GP2-2}. Roughly speaking, it is based on the $2$-sphere unit vector flow. While strong Petersen colouring corresponds to a finite subset of 30 points on a sphere, \textit{almost strong Petersen colouring} corresponds to a bigger subset of around 78 points, which still seems to preserve all structures in the oriented way, including o6c4c and o5cdc solutions. It also seems to work more often for snarks, e. g. the number increases from 8 to 1579 snarks, among 3247 snarks with number of vertices $\le 28$, so around $48\%$ have this colouring.

\section{Gluing an orientable surface from o6c4c}

In this section we show how to glue an orientable surface from o6c4c circuits.

But first we need 1 more notion for the edges. For illustration purposes let's check o6c4c for one of Blanuša snarks on 18 vertices, displayed in Fig.~\ref{fig:blanusa}.

\begin{figure*}
  \checkoddpage \ifoddpage \forcerectofloat \else \forceversofloat \fi
  \includegraphics[width=0.93\linewidth]{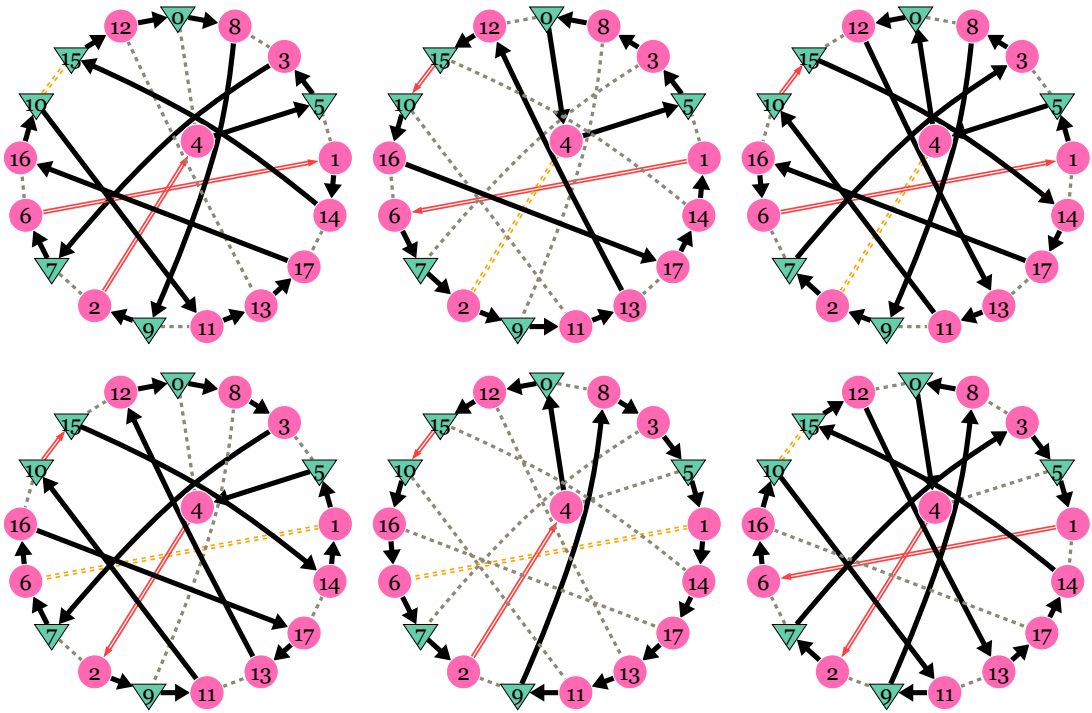}
  \caption{Solution for second Blanuša snark}
  \label{fig:blanusa}
\end{figure*}

We have already divided vertices into ordered and disordered; for 6c4c solutions we can also split edges into 2 classes, \textit{poor} and \textit{rich}. Terminology here is same as in \citep{HagglundSteffen} paper for Petersen colouring situation. The idea is to fix an edge, and look at the 4 appearances of this edge in 6c4c solution, and look at its neighbouring edges. We say that the edge is \textit{poor} if it appears in 2 combinations with its neighbours, and \textit{rich} if it has 4 combinations, see Fig.~\ref{fig:edges}. E. g., for Petersen o6c4c all 15 edges are rich; for the displayed o6c4c for Blanuša snark we have 3 poor edges and 24 rich edges. We also note that if the 6c4c solution comes from Petersen colouring, then the terminology coincides, we get the same sets of rich and poor edges.

\begin{figure}
  \checkoddpage \ifoddpage \forcerectofloat \else \forceversofloat \fi
  \centering
  \subfloat[Rich edge]{\includegraphics[width=0.45\linewidth]{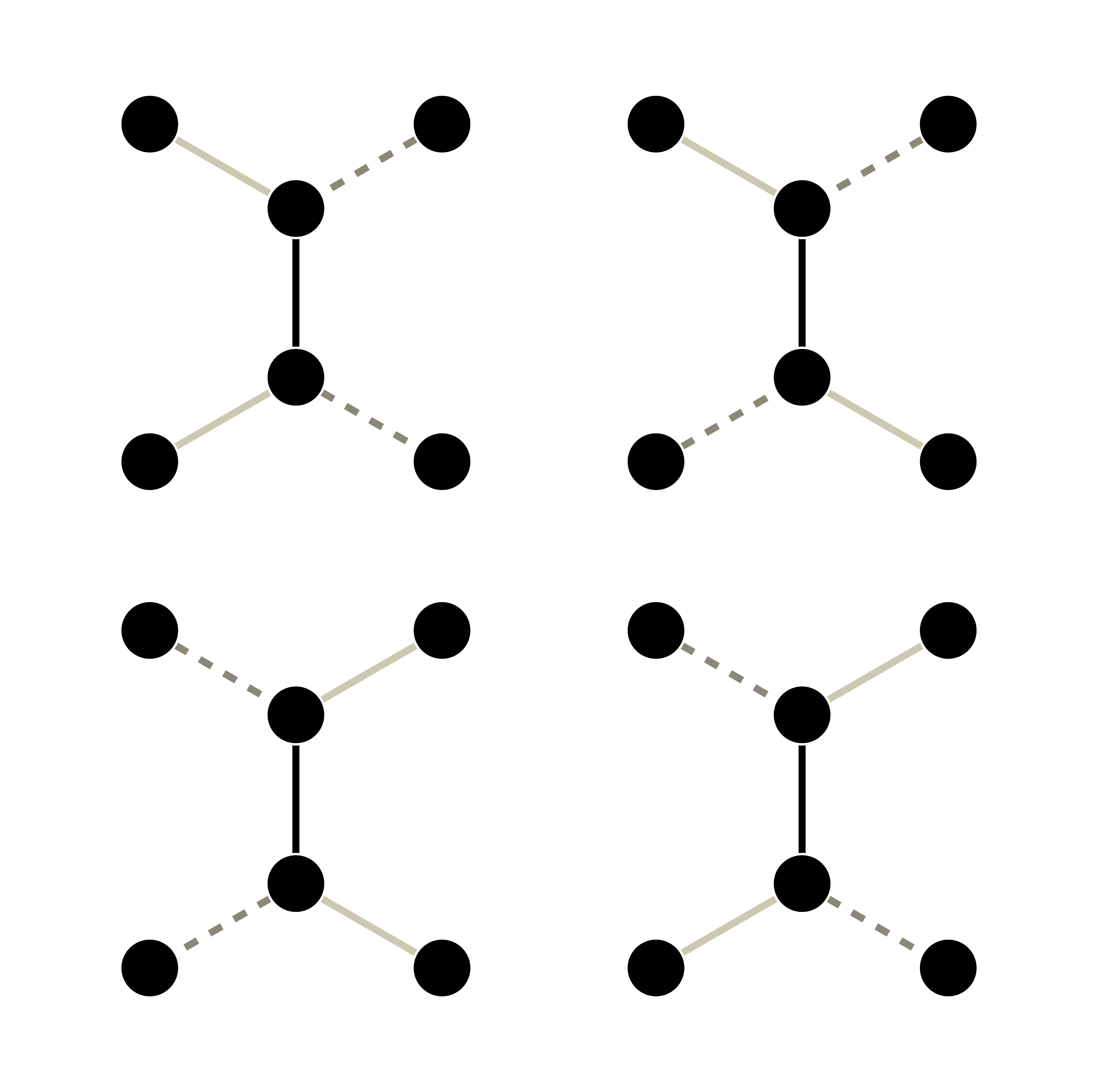}}
  \subfloat[Poor edge]{\includegraphics[width=0.45\linewidth]{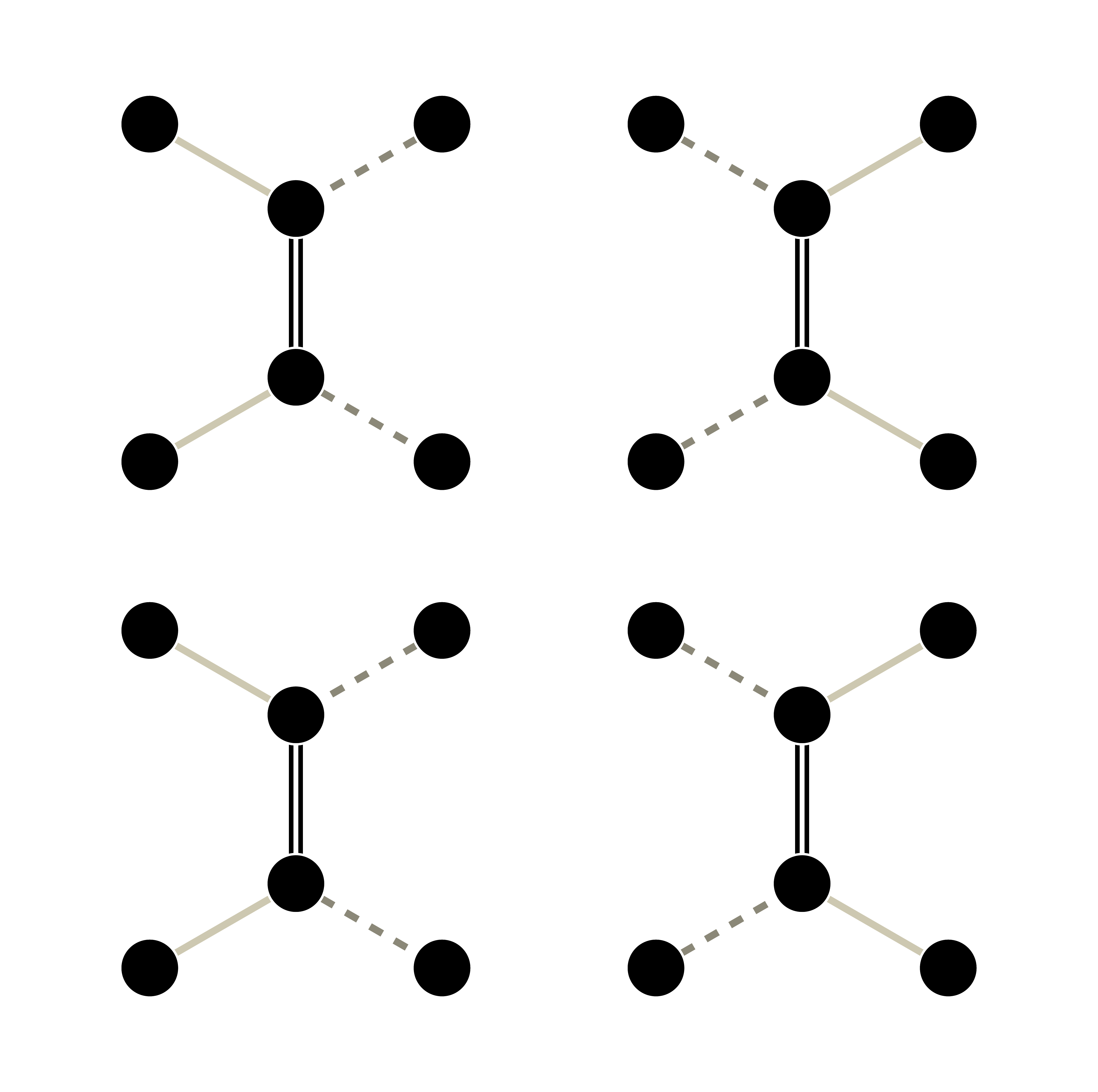}}
  \caption{2 types of edges in 6c4c}
  \label{fig:edges}
  \setfloatalignment{b}
\end{figure}

There's also an interrelationship between vertices and edges in the o6c4c solutions. We have 2 types of rich edges: those that connect disordered and disordered vertices (we call them \textit{drd} edges), and those that connect disordered and ordered vertices (\textit{dro} edges). We also have 2 types of poor edges: those that connect disordered and disordered vertices (\textit{dpd} edges) and those that connect ordered and ordered vertices (\textit{opo} edges).

Now we explain how to glue o6c4c circuits into a surface, in 3 steps:
\begin{enumerate}

\item First we start with an o6c4c without ordered vertices, see Fig.~\ref{fig:all_disordered}.

\begin{figure*}
  \checkoddpage \ifoddpage \forcerectofloat \else \forceversofloat \fi
  \centering
  \includegraphics[width=0.92\linewidth]{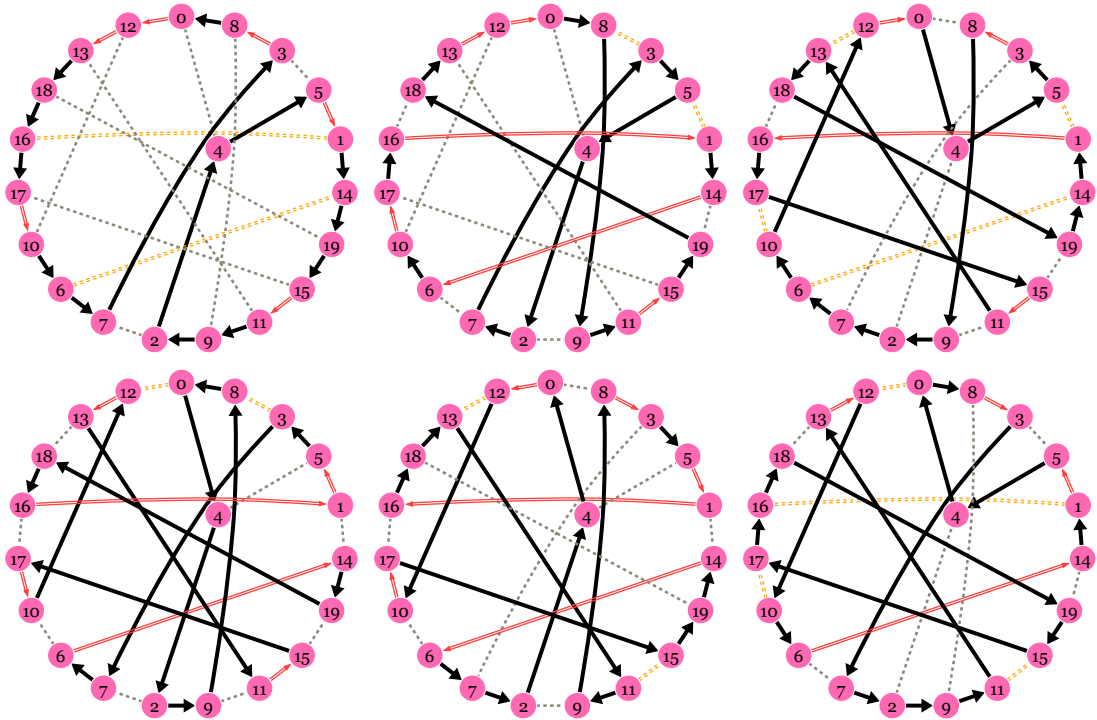}
  \caption{Snark 20.05cyc4-2 without ordered vertices in o6c4c}
  \label{fig:all_disordered}
\end{figure*}

In this case we only have $drd$ and $dpd$ edges. For such an o6c4c we have a nice property, that if we look at any pair of consecutive edges, this pair is covered orientably, meaning it's covered twice and in opposite directions. Then for our graph $G$ we can build a new graph $S$, corresponding to the orientable surface, where edges of $S$ correspond to pairs of consecutive edges of $G$, and vertices of $S$ correspond to edges of $G$ (although not 1-to-1, rich edges of o6c4c of $G$ map to $4$-valent vertices of $S$, while poor edges of o6c4c of $G$ map to two $2$-valent vertices of $S$). It's easy to see that faces of $S$ correspond to circuits in o6c4c. This way we get a surface without a boundary.

\item Let's again return to the Petersen graph. In addition to $drd$ edges it also has $dro$ edge. However we can apply similar gluing procedure ignoring $dro$ edges at first, and by doing so we already will converge on a solution, displayed in Fig.~\ref{fig:petersen_tesselation}. Here we assume that left and right borders are glued together, and same assumption holds for up and down borders. After gluing we get 3 boundaries\sidenote[][]{Although visually they look more like "holes", but they are not holes in the topological sense}, each one corresponds to an ordered vertex in o6c4c solution. 

\begin{figure}
  \checkoddpage \ifoddpage \forcerectofloat \else \forceversofloat \fi
  \centering
  \includegraphics[width=0.75\linewidth]{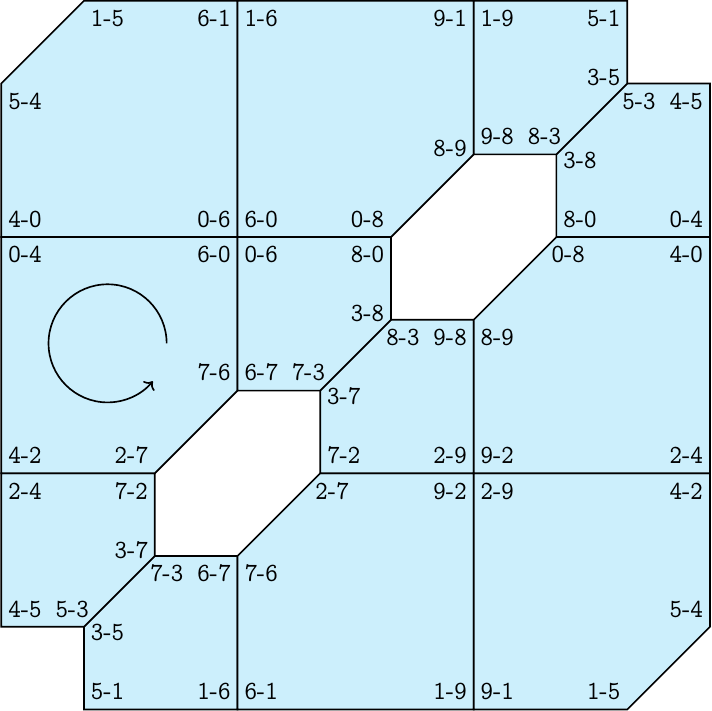}
  \caption{Petersen graph as a surface}
  \label{fig:petersen_tesselation}
  \setfloatalignment{b}
\end{figure}

\item Hopefully these 2 cases are explicit enough to understand what happens in the general case of an o6c4c solution. The general case is quite similar to what happens in Petersen graph case, except for addition of $opo$ edges, so usually boundaries don't have a 1-to-1 correspondence with ordered vertices, but it's something more complicated. We have come up with an \mytextbf{algorithm} for how to find the boundaries. Main idea is that if we know 1 edge of surface graph that lives on a boundary, then it's easy to reconstruct other edges of this boundary, reconstructing them in cyclic order.

To illustrate this algorithm, we will describe an "enter-and-exit the boundary" procedure. E.~g., we can look again at Petersen graph case, say we found surface edge $"(2-7)-(7-6)"$. Then we retrieve the surface face on which it lives, this will be a circuit $"2-7-6-0-4-2"$. We assume that we can enter into the boundary, and exit the boundary, we do this by traversing edges $"(4-2)-(2-7)-(7-6)-(6-0)"$, so $"(4-2)-(2-7)$ is "entering the boundary", and $(7-6)-(6-0)"$ is "exiting" it. Now, we found the exit $drd$ edge (or maybe it will be a $dpd$ edge) and we know how to glue another face to this circuit, this is $"0-6-7-3-8-0"$. We repeat the procedure and find the next surface edge $"(6-7)-(7-3)"$, etc.

\item Basically that's it, but we also wanted to note that the constructed surface can happen to be disconnected, e.~g., if we have a circuit consisting only of poor edges (then we necessarily have another duplicate of this circuit, and they just glue together with each other).

\item As we already mentioned in Section~\ref{pet_o6c4c}, the surface we just built for Petersen graph has already appeared in the literature. The Fig.~\ref{fig:petersen_tesselation} is basically a replica of a figure from \bothcite{DevadossMorava} paper. The first appearance of this surface we know dates back to 1926 \bothcite{BrahanaCoble} paper. Interestingly, there also exists a non-orientable version, if we decide to close the boundaries with cross-caps, see \bothcite{BHV}, and \bothcite{LMY}, which also mentions the relation to Petersen graph. Last but not least, we'd like to mention the great dodecahedron construction from \citep{Apery1998} and \citep{DevadossMorava} which is a dual graph to the orientable double cover of the surface we just built.

\end{enumerate}

\section{A split into 2$\times$6cdcs, and another pair of surfaces}

Sometimes the o6c4c solution splits into two 6-cycle double covers, e.~g. this happens for Petersen graph, each 6cdc containing six 5-circuits. Although these are not orientable cycle double covers, we can still glue an orientable surface with boundary for each of them, see Fig.~\ref{fig:petersen_6cdcs}.

\begin{figure}
  \checkoddpage \ifoddpage \forcerectofloat \else \forceversofloat \fi
  \centering
  \subfloat[Surface 1]{\includegraphics[width=0.45\linewidth]{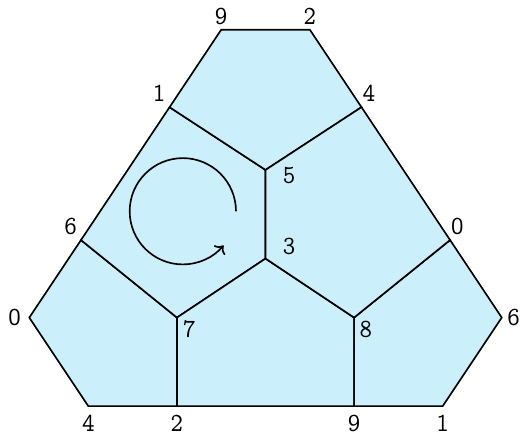}}
  \subfloat[Surface 2]{\includegraphics[width=0.45\linewidth]{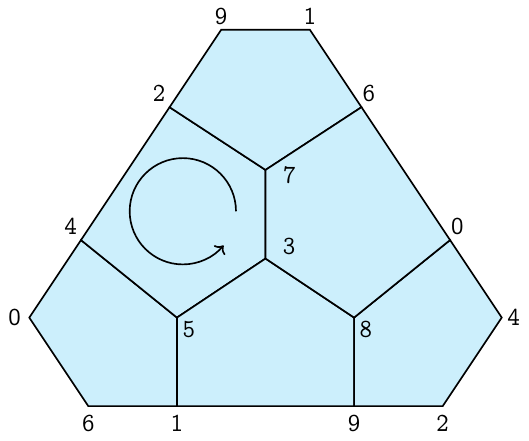}}
  \caption{Petersen o6c4c split into two 6cdcs}
  \label{fig:petersen_6cdcs}
  \setfloatalignment{b}
\end{figure}

Some additional notes:

\begin{itemize}

\item Boundary of each surface consists of the same edges of the graph, and these edges only pass through disordered vertices. It's also easy to check that all \textit{drd} edges are on the boundary, plus some subset of \textit{dpd} edges can also fall on the boundary.

\item In Petersen graph case, each surface is a hemi-dodecahedron, and the boundaries are actually the same, each being a 12-circuit. So we can even glue these 2 surfaces together, and get a graph on a 2-sphere. In some sense this new graph resembles a dodecahedron by having 12 5-circuits, even though it's not. \sidenote[][-0.5cm]{It would become a dodecahedron tessellation if we shift vertices cyclically by 1 in one of the parts.}

\item In general case the boundaries could be different from each other, e.~g. check o6c4c in Fig.~\ref{fig:different_boundaries}. One of the 6cdcs has 1 connected component with 2 boundaries, each of length 10, while the other 6cdc has 3 connected components and 4 boundaries in total, each of length 5.

\begin{figure*}
  \checkoddpage \ifoddpage \forcerectofloat \else \forceversofloat \fi
  \centering
  \includegraphics[width=0.92\linewidth]{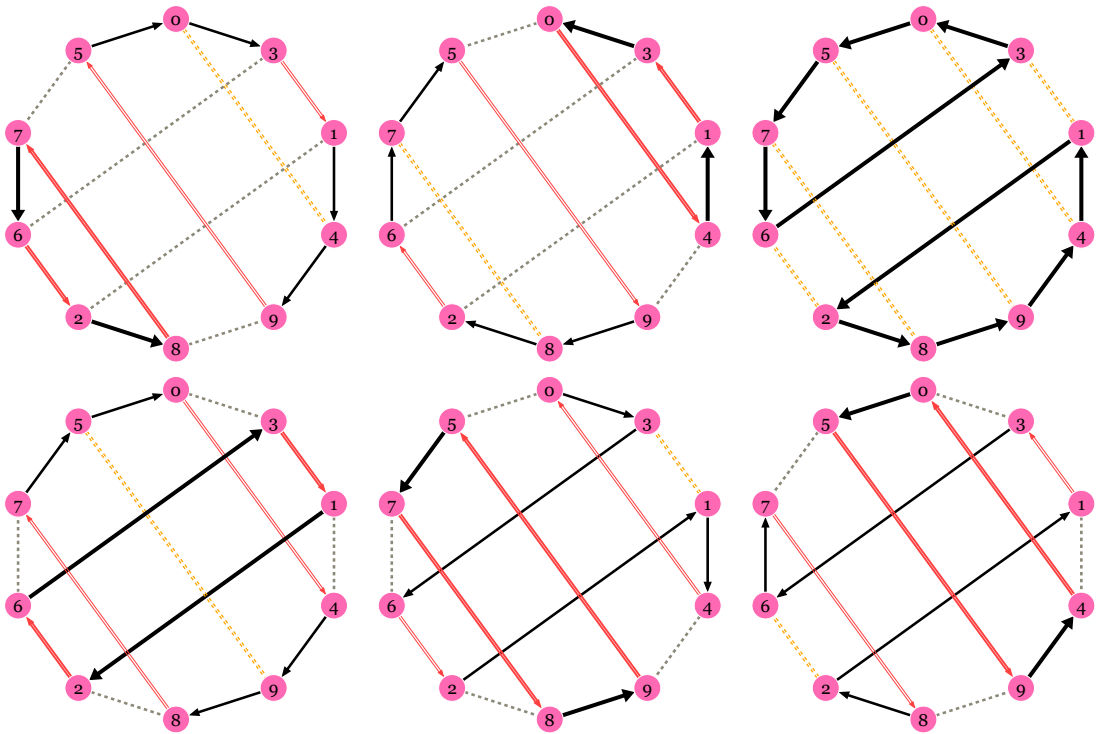}
  \caption{Cubic graph 10.04-6}
  \label{fig:different_boundaries}
\end{figure*}

\end{itemize}

\section{o6cdc from o6c4c}

In this section we will associate a ribbon graph with an o6c4c solution, which also turns up to be an o6cdc solution when all vertices are \textit{disordered}.

\subsection{Ribbon graphs}

\begin{restatable}[ribbon graph]{defn}{rib}
A ribbon graph is a graph with additional cyclic ordering on the half-edges incident to each vertex.
\end{restatable}

A ribbon graph is also another way to represent a (topological) graph embedded into surface.

\subsection{Petersen graph, and o5cdc}

For example, let's again return to the Petersen graph o6c4c depicted in Fig.~\ref{fig:petersen}, and assign the edge orders the following way: half-edges of all vertices except 3 have a counter-clockwise order, while half-edges of vertex 3 are ordered clockwise. E.~g., edges around vertex 0 should be ordered cyclically as $"8 \rightarrow 4 \rightarrow 6 \rightarrow 8"$. The way we get a new (topological) graph embedded into surface is by describing its faces. And to find the faces, we trace their edges. E.~g., let's follow the edge $8 \rightarrow 0$. We use the ribbon graph order, where 4 follows 8, so we deduce that the next edge would be $0 \rightarrow 4$, and so on, until we return to the first edge we started with. The resulting faces are depicted in Fig.~\ref{fig:petersen_o5cdc}.

\begin{figure*}
  \checkoddpage \ifoddpage \forcerectofloat \else \forceversofloat \fi
  \centering
  \includegraphics[width=0.92\linewidth]{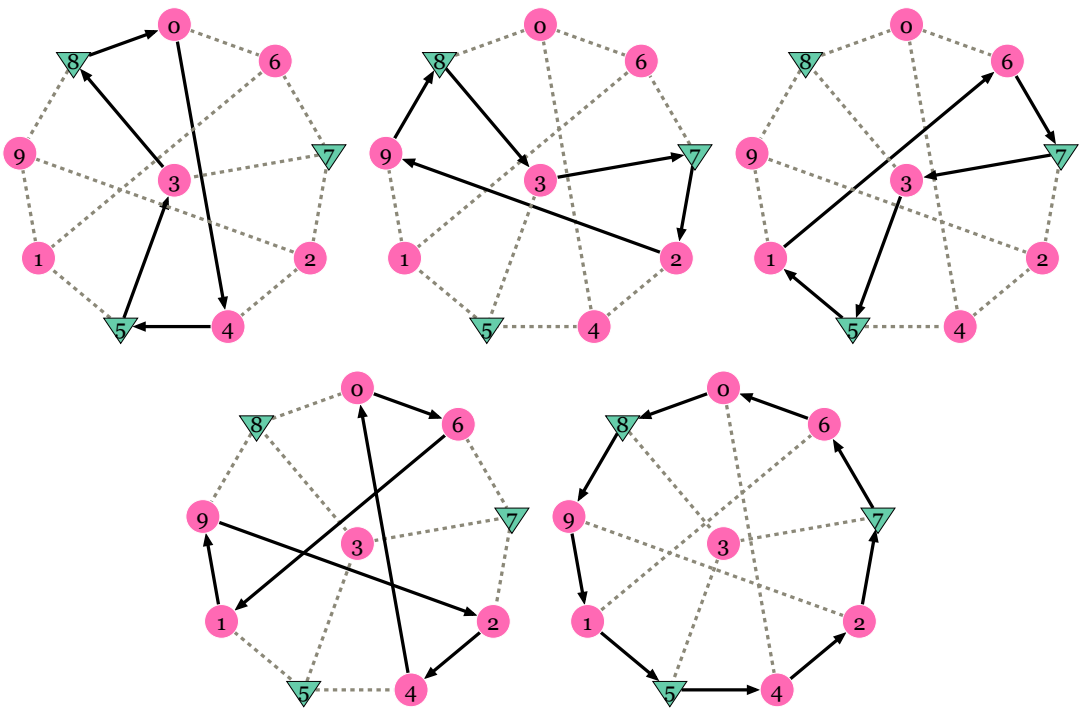}
  \caption{Petersen o5cdc built from o6c4c}
  \label{fig:petersen_o5cdc}
\end{figure*}

By a strange coincidence, the faces we get are actually circuits of the Petersen graph, and what we get is an oriented 5-cycle double cover! For example, if we would have ordered vertex 3 counter-clockwise, we would get a topological face

$$0 \rightarrow 4 \rightarrow 5 \rightarrow 3 \rightarrow 7 \rightarrow 2 \rightarrow 9 \rightarrow 8 \rightarrow 3 \rightarrow 5 \rightarrow 1 \rightarrow 6 \rightarrow 7 \rightarrow 3 \rightarrow 8 \rightarrow 0,$$

which is not a circuit.

Notice also, that we have chosen half-edge orders in such a way, that pairs of edges around the \textit{ordered} vertices 5, 7 and 8 have the same orientations as in the o6c4c solution.\sidenote[][]{This is where the vertices got their name.} We'll show in \citep{GP2-2} that it's related to \textit{almost strong Petersen colouring}.

\subsection{o6cdc for o6c4c without ordered vertices}

It seems that we have chosen the half-edge orders quite randomly in the previous subsection. While this is true, we can behave much more deterministically when we don't have any ordered vertices in the o6c4c, e.~g. as in the Fig.~\ref{fig:all_disordered}. Let's enumerate cycles as $1,2,3$ in the upper row, and $4,5,6$ in the lower row, looking at them from left to right. And let's look at how the edges fall into the perfect matching, in which cycles this happens, and group cycles together for the same edge.\sidenote[][-0.5cm]{This is actually the first time perfect matching is needed in any crucial way in this paper!} E.~g. for vertex 0 we get (unordered) pairs $(1, 2)$, $(3, 5)$ and $(4, 6)$, see Fig.~\ref{fig:all_disordered_v0}.

\begin{figure}
  \checkoddpage \ifoddpage \forcerectofloat \else \forceversofloat \fi
  \centering
  \includegraphics[width=0.75\linewidth]{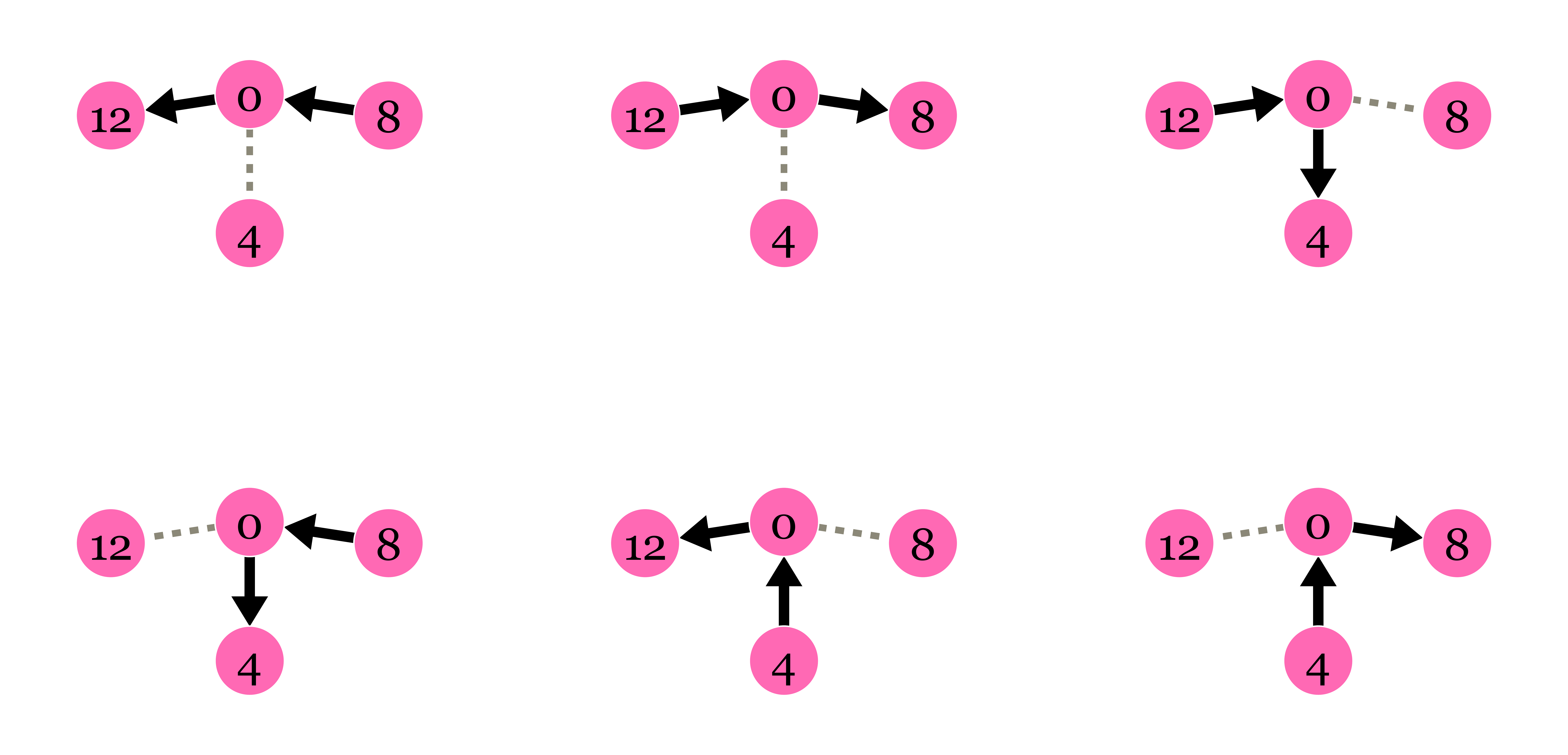}
  \caption{Vertex 0 of the snark 20.05cyc4-2}
  \label{fig:all_disordered_v0}
  \setfloatalignment{b}
\end{figure}

Next step, is we want to decide on the order of numbers inside the pairs, and then we want to decide on the cyclic order between the pairs. For now let's just agree that the pair that contains $1$ has it in the first position. Then we look at cycle 1 and fix the order of edges as "incoming $\rightarrow$ outgoing $\rightarrow$ perfect-matching-edge $\rightarrow$ incoming". Now we can fix the order of numbers in the other pairs as well, so that first number in the pair indicates the cycle with the same order of edges as we fixed. We get the (ordered and cyclically ordered) edge pairs $(1, 2)$, $(3, 5)$ and $(6, 4)$. We will denote this further as $[12 \rightarrow 35 \rightarrow 64]$ for brevity, and it's same as $[35 \rightarrow 64 \rightarrow 12]$ and $[64 \rightarrow 12 \rightarrow 35]$.

Note that we could also put $1$ into a second position in the pair, getting another (cyclic) order $[21 \rightarrow 46 \rightarrow 53]$. However, if we interpret this list of six numbers as a permutation, exactly one of them will produce an \textit{even} permutation.\sidenote[][-0.5cm]{Evenness is defined as a parity of the number of inversions.} In this case it's easy to check, that the cyclic order $[12 \rightarrow 35 \rightarrow 64]$ and all its reorderings correspond to even permutations, which enforces the order of numbers inside the pairs.

So now we can take an o6c4c without ordered vertices and create a ribbon graph deterministically. It turns out, that we can prove the following

\begin{restatable}[o6cdc from o6c4c]{thm}{}
If all vertices in o6c4c solution are disordered, then we get o6cdc from the construction defined above.
\end{restatable}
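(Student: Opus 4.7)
My plan is to reformulate the statement as a face-counting problem for the ribbon graph $R$ built in the excerpt. At each disordered vertex $v$, the three pairs of o6c4c cycles biject with the three edges incident to $v$ (each pair corresponds to its common perfect-matching edge), and the even-permutation rule canonically orders those three pairs cyclically, hence orders the three half-edges at $v$ cyclically. This is precisely the data of a ribbon graph on $G$. Because every edge of a ribbon graph contributes one side per direction, face-tracing automatically partitions all $2|E(G)|$ directed edges into orbits, each of which is an oriented closed walk; consequently any face set of $R$ covers every edge exactly twice, once in each direction, and every face is an oriented Eulerian subgraph of $G$. Thus the theorem reduces to showing that $R$ has exactly six faces.

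I would then analyse the face-tracing permutation $\pi$ on directed edges locally. At each vertex $v$ the canonical cyclic order yields three corners, and each corner $(h_i,h_j)$ admits a natural label: the unique o6c4c cycle entering along $h_i$ and leaving along $h_j$, which at a disordered vertex is exactly the \emph{first} cycle of the pair associated with the third edge at $v$. A short observation I would record is the following local compatibility: for any directed edge $d = v \to u$ traversed by a face, the labels of the corners immediately before and immediately after $d$ are both among the two o6c4c cycles covering $d$ in the direction of travel. Hence the label-sequence along a face is constrained at every step by the pair of candidate cycles of the edge just crossed, so the evolution of labels along a face is entirely encoded by the local pair structure at the successive vertices.

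The main obstacle, and the technical heart of the proof, is converting this local picture into the global count of exactly six orbits. The subtlety is that an individual o6c4c cycle $C_i$ need not be preserved by $\pi$: $C_i$ may be \emph{first} in its pair at one endpoint of an edge and \emph{second} at the other, so neither the cycles nor the labels themselves are $\pi$-invariants, and one cannot simply identify the six faces with the six o6c4c cycles. To get around this I plan to use the even-permutation rule as a global parity constraint: I will show that it forces the label-transition across each edge to act as an even permutation on the set of six o6c4c cycles, so that the composition of such transitions around any closed face-trace lies in $A_6$. Combined with the fact that the disordered hypothesis uniquely selects the \emph{first} cycle at every corner, this should produce a $\pi$-equivariant surjection from the directed edges onto $\{1,\dots,6\}$ whose fibres are exactly the orbits of $\pi$. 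Making precise why the disordered hypothesis is exactly what is needed for the $A_6$-valued monodromy to trivialise globally is the step I expect to demand the most care.
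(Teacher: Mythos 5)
Your plan diverges from the paper's proof, which is a finite classification: the paper writes down, for each vertex, three local ``triples'' $(ab\to cd)+ef$ (incoming pair, outgoing pair, perfect-matching pair), derives three transition rules describing how a triple evolves across an edge, checks (by direct enumeration of the $360$ possibilities) that the triples fall into exactly six equivalence classes, and then observes that each vertex contributes its three corners to three \emph{distinct} classes. That last observation is the actual content of the theorem: it forces each face to visit every vertex at most once, hence to be a circuit.

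This is where your proposal has a genuine gap. You assert that ``every face is an oriented Eulerian subgraph of~$G$,'' and on that basis reduce the theorem to counting faces. That assertion is false for ribbon graphs in general: a face is a closed walk on directed edges, and nothing prevents it from traversing the same underlying edge in both directions (the two sides of a ribbon lying on one face). When this happens the face's edge-support need not have even degree at every vertex, and worse, that edge is then covered twice by a single face rather than by two distinct cycles, so even six faces would not yield an o6cdc. Ruling this out is exactly what the paper's ``each vertex appears at most once per class'' step does, and your argument does not address it. Separately, the $A_6$-monodromy idea is not yet an argument: you correctly observe that the o6c4c cycles are not $\pi$-invariant labels, but you give no mechanism by which the even-permutation normalization forces the monodromy around a face to be trivial rather than merely even, and you acknowledge this yourself. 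If you want a conceptual proof, the paper's post-proof remark suggests the right place to look --- the pentad/outer-automorphism structure of $S_6$ --- and any such argument will still need to address circuit-ness of faces, not just their number.
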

\begin{proof}
Let's again trace out the edges of one of the faces in the ribbon graph. And let's look locally at some vertex, and write down first the incoming edge, then the outgoing edge, and then separately a perfect-matching edge (let's call such a combination of edges as a \textit{triple}). E.~g., let's say we have a $[12 \rightarrow 35 \rightarrow 64]$ vertex, then we know how the faces that go through this vertex look locally:
\begin{itemize}
\item $(12 \rightarrow 35) + 64$ (meaning $12$ is incoming edge, $35$ is outgoing);
\item $(35 \rightarrow 64) + 12$;
\item $(64 \rightarrow 12) + 35$.
\end{itemize}
Say we have the case $(12 \rightarrow 35) + 64$, then the next pair of edges in the face can be one of the following:
\begin{itemize}
\item $(35 \rightarrow 12) + 64$ (edge $35$ is poor);
\item $(35 \rightarrow 46) + 21$ (edge $35$ is poor);
\item $(53 \rightarrow 16) + 24$ (edge $35$ is rich);
\item $(53 \rightarrow 42) + 61$ (edge $35$ is rich).
\end{itemize}

\vspace{-10px} 

\begin{table}
  \checkoddpage \ifoddpage \forcerectofloat \else \forceversofloat \fi
  \centering
  \begin{tabular}{c@{\hskip 1.5cm}c@{\hskip 1.5cm}c}
\mytextbf{Cycle 1}: & \mytextbf{Cycle 2}: & \mytextbf{Cycle 3} \\
$(12 \rightarrow 43) + 65$ & $(12 \rightarrow 35) + 64$ & $(12 \rightarrow 53) + 46$\\
$(21 \rightarrow 53) + 64$ & $(21 \rightarrow 36) + 54$ & $(21 \rightarrow 63) + 45$\\
$(13 \rightarrow 42) + 56$ & $(13 \rightarrow 24) + 65$ & $(13 \rightarrow 52) + 64$\\
$(31 \rightarrow 62) + 54$ & $(31 \rightarrow 26) + 45$ & $(31 \rightarrow 42) + 65$\\
$(14 \rightarrow 25) + 63$ & $(14 \rightarrow 23) + 56$ & $(14 \rightarrow 62) + 53$\\
$(41 \rightarrow 26) + 53$ & $(41 \rightarrow 25) + 36$ & $(41 \rightarrow 32) + 56$\\
$(15 \rightarrow 24) + 36$ & $(15 \rightarrow 32) + 46$ & $(15 \rightarrow 26) + 43$\\
$(51 \rightarrow 23) + 46$ & $(51 \rightarrow 62) + 43$ & $(51 \rightarrow 24) + 63$\\
$(61 \rightarrow 32) + 45$ & $(16 \rightarrow 42) + 35$ & $(16 \rightarrow 25) + 34$\\
$(16 \rightarrow 52) + 43$ & $(61 \rightarrow 52) + 34$ & $(61 \rightarrow 23) + 54$\\
\\
\mytextbf{Cycle 4}: & \mytextbf{Cycle 5}: & \mytextbf{Cycle 6}:\\
$(12 \rightarrow 54) + 63$ & $(12 \rightarrow 34) + 56$ & $(12 \rightarrow 63) + 54$\\
$(21 \rightarrow 34) + 65$ & $(21 \rightarrow 35) + 46$ & $(21 \rightarrow 43) + 56$\\
$(13 \rightarrow 25) + 46$ & $(13 \rightarrow 26) + 54$ & $(13 \rightarrow 62) + 45$\\
$(31 \rightarrow 24) + 56$ & $(31 \rightarrow 25) + 64$ & $(31 \rightarrow 52) + 46$\\
$(14 \rightarrow 52) + 36$ & $(14 \rightarrow 32) + 65$ & $(14 \rightarrow 26) + 35$\\
$(41 \rightarrow 62) + 35$ & $(41 \rightarrow 52) + 63$ & $(41 \rightarrow 23) + 65$\\
$(15 \rightarrow 23) + 64$ & $(15 \rightarrow 62) + 34$ & $(15 \rightarrow 42) + 63$\\
$(51 \rightarrow 26) + 34$ & $(51 \rightarrow 42) + 36$ & $(51 \rightarrow 32) + 64$\\
$(16 \rightarrow 32) + 54$ & $(16 \rightarrow 23) + 45$ & $(16 \rightarrow 24) + 53$\\
$(61 \rightarrow 42) + 53$ & $(61 \rightarrow 24) + 35$ & $(61 \rightarrow 25) + 43$\\
  \end{tabular}
  \caption{o6cdc triples}
  \label{table:o6cdc_triples}
  \setfloatalignment{b}
\end{table}

\vspace{15px} 

Let's interpret triples from the same face as triples falling into the same equivalence class. Say we have a triple $(ab \rightarrow cd) + ef$. Then we can distill 3 types of "rules" from the observation above:

\vbox{
\begin{enumerate}
\item $(ab \rightarrow cd) + ef$ and $(cd \rightarrow ab) + ef$ are in same class;
\item $(ab \rightarrow cd) + ef$ and $(ab \rightarrow fe) + dc$ are in same class;
\item $(ab \rightarrow cd) + ef$ and $(ba \rightarrow ce) + df$ are in same class;
\end{enumerate}
}

Overall we have 360 such triples to classify. Turns out they fall into 6 equivalence classes, if we follow the rules above.\sidenote[][-0.5cm]{It's simple enough to verify with a bit of coding.} If we look at the ordered pairs, for each triple we can find a specific representative, that has same equivalence class, and where we have shuffled the pairs such that $1$ is in the first pair, $2$ is in the leftmost possible pair, and so on. This way we get 60 representatives. The full list for all faces-cycles can be found in the table~\ref{table:o6cdc_triples} above. Although the table~\ref{table:o6cdc_triples} looks slightly cryptic, it bears a strong resemblance to pentads in the theory of outer automorphism of symmetric group $S_6$ \bothcite{Howard2008}, so it looks plausible to reformulate the proof above reusing this theory as well.

Now it's easy to see that each vertex is present in each face (equivalence class) at most 1 time, so we actually get the circuits of the original graph. We get 6 classes, so the solution we built is an o6cdc.
\end{proof}

\newpage

As an example, for graph in Fig.~\ref{fig:all_disordered} we get the following o6cdc in Fig.~\ref{fig:all_disordered_o6cdc}.

\begin{figure*}
  \checkoddpage \ifoddpage \forcerectofloat \else \forceversofloat \fi
  \centering
  \includegraphics[width=0.92\linewidth]{graph20g2-44-o6cdc.pdf}
  \caption{o6cdc built from o6c4c of the snark 20.05cyc4-2 without ordered vertices}
  \label{fig:all_disordered_o6cdc}
\end{figure*}

\vbox{
Another interesting thing to note is that we can reformulate a classical theorem that 3cdc is equivalent to o4cdc in the terms defined above, because we can get only the following 4 (representative) triples in the ribbon graph:

\begin{itemize}
\item $(14 \rightarrow 25) + 63$;
\item $(41 \rightarrow 25) + 36$;
\item $(14 \rightarrow 52) + 36$;
\item $(41 \rightarrow 52) + 63$.
\end{itemize}
}

\newpage

\section{Miscellaneous}

\subsection{Gallery of o6c4c stuff}

\subsubsection*{nz5 from Petersen graph o6c4c}

\begin{figure}
  \checkoddpage \ifoddpage \forcerectofloat \else \forceversofloat \fi
  \centering
  \includegraphics[width=0.75\linewidth]{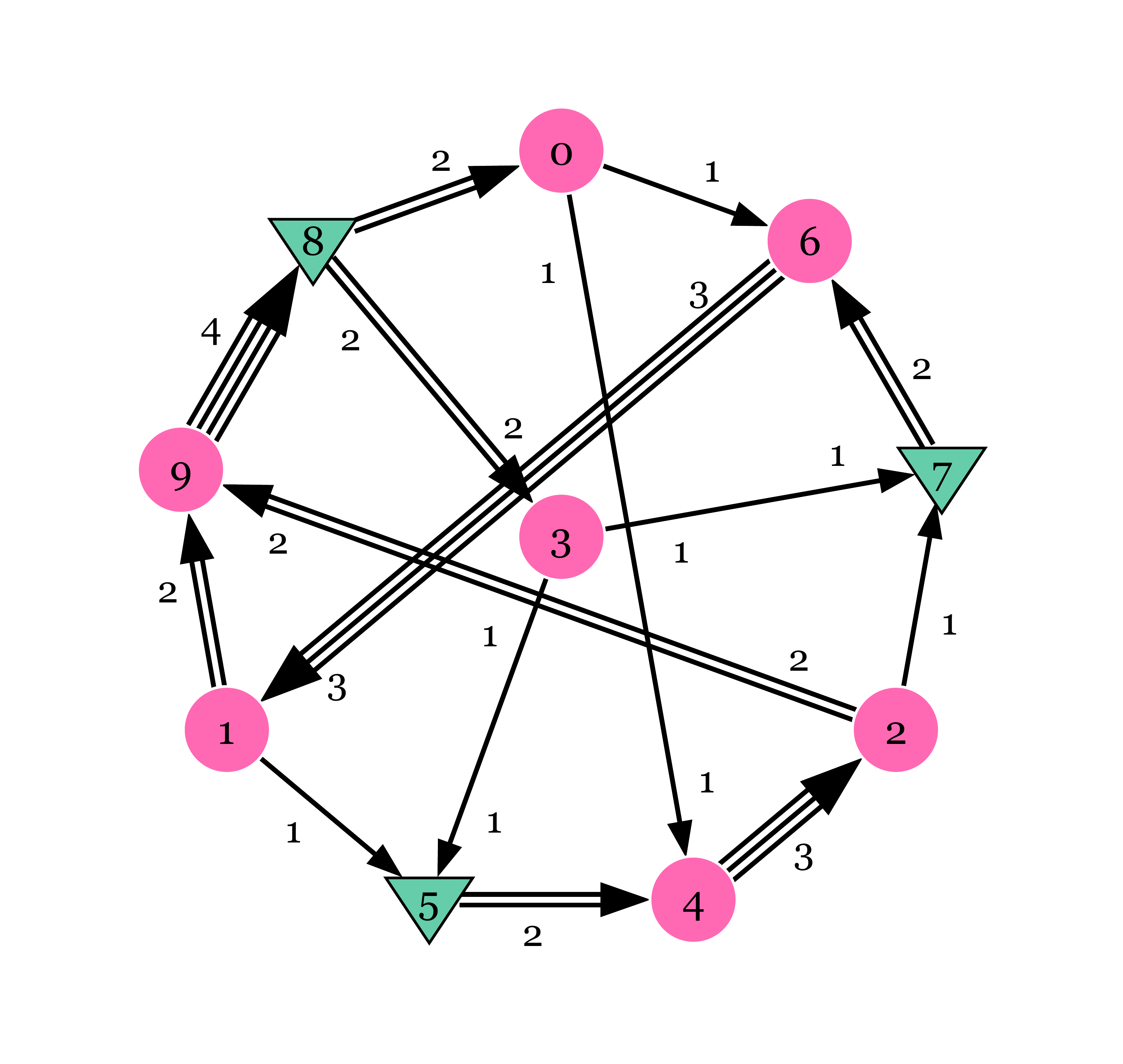}
  \caption{nz5-flow from o6c4c for Petersen graph}
  \label{fig:petersen_nz5}
  \setfloatalignment{b}
\end{figure}

A \textit{nowhere-zero flow} is a non-zero valuation on the edges $\varphi(e), e \in E$ such that the sum of incoming values equals the sum of outgoing values. If all values are non-zero integers such that $\varphi(e) < k$, then we say that it's a \textit{nowhere-zero $k$-flow}. A famous conjecture by W.~T.~Tutte \bothcite{Tutte} states that

\begin{restatable}[nz5]{conj}{}
Every bridgeless cubic graph has a nowhere-zero 5-flow.
\end{restatable}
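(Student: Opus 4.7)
The nowhere-zero 5-flow conjecture has resisted proof since Tutte posed it, so I will not attempt a full resolution but rather sketch how an nz5-flow might be extracted from an o6c4c solution, in the spirit of Figure~\ref{fig:petersen_nz5} and the rest of the paper.

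The plan is as follows. Suppose $G$ is a bridgeless cubic graph admitting an o6c4c with oriented circuits $C_1, \ldots, C_6$ (this holds for class~1 cubic graphs, for the Petersen graph, for the Isaacs flower snarks, and conjecturally for all bridgeless cubic graphs by the oriented Berge--Fulkerson conjecture). For each edge $e$ and each cycle $C_i$, let $\varepsilon_i(e) \in \{-1, 0, +1\}$ be $+1$ if $C_i$ traverses $e$ in a fixed reference direction, $-1$ if $C_i$ traverses $e$ in the opposite direction, and $0$ if $e \notin C_i$. Since each oriented circuit is itself a $\mathbb{Z}$-flow, the weighted sum
\[
\varphi(e) \;=\; \sum_{i=1}^{6} w_i \, \varepsilon_i(e)
\]
is automatically a flow for any choice of integer weights $w_1, \ldots, w_6$. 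The o6c4c property forces $\sum_i \varepsilon_i(e) = 0$ at every edge, so uniform weights produce the trivial flow; the whole problem reduces to choosing weights, ideally from $\{\pm 1, \pm 2\}$, such that $\varphi(e) \ne 0$ on every edge while keeping $|\varphi(e)| < 5$.

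A natural first attempt would leverage the split of the o6c4c into two $6$cdcs (when it exists), assigning weights from $\{+1, +2\}$ to one sub-cover and $\{-1, -2\}$ to the other. The boundary analysis of the earlier section on orientable surfaces pinpoints the at-risk edges---the \textit{drd} and \textit{dpd} boundary edges---and one would then need to show that the remaining degrees of freedom, namely the orientation choices on individual circuits and the labelling of the two sub-covers, suffice to avoid cancellation on these edges. For o6c4c solutions that do not split into two $6$cdcs, one would hope to substitute the ribbon-graph/o6cdc construction developed in Section~5, though working out such a substitute is more delicate.

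The hard part is, of course, that nz5 is famously open. Even granting the full oriented Berge--Fulkerson conjecture and the combinatorial scaffolding above, producing a valid weight assignment that works simultaneously on every edge of an arbitrary bridgeless cubic graph is genuinely hard---this is precisely the obstruction that has blocked progress on nz5 for decades. Accordingly, I expect the text following this conjecture to illustrate the construction concretely only on the Petersen graph (matching Figure~\ref{fig:petersen_nz5}), rather than to establish nz5 in full generality.
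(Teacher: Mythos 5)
You correctly recognize that nz5 is Tutte's famously open conjecture, that the paper does not (and could not) prove it, and that the text following the statement only illustrates extraction of an nz5-flow from an o6c4c on the Petersen graph. The paper's specific weight vector is $(0,1,2,0,2,1)$ rather than your proposed $\{\pm 1, \pm 2\}$ scheme tied to a split into two 6cdcs, but since $\sum_i \varepsilon_i(e) = 0$ at every edge, adding a constant to all weights leaves $\varphi$ unchanged, so the two parametrizations span the same space of flows; the paper also notes that in general the best that drops out of an o6c4c for free is a nz7-flow (weights $1,2,4$ on any three circuits), which is weaker than Seymour's 6-flow theorem. Your framing is accurate and your prediction of the paper's scope is exactly right.
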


Turns out it's possible to create a nowhere-zero 5-flow from the o6c4c solution for Petersen graph, by summing up the cycles with weights 0, 1, 2, 0, 2 and 1 (reading cycles in same order as before, first upper row, then lower row, from left to right), see Fig.~\ref{fig:petersen_nz5}. Also interestingly, it's different from a nz5 that we can form from a related o5cdc solutions we built previously.

In general, it seems that the best we can do is build a nz7-flow (by taking any 3 cycles with weights 1, 2 and 4), but this is quite a weak result, because a theorem by P.~Seymour \bothcite{Seymour6flows} states that we can build a nowhere-zero 6-flow for any snark.

\subsubsection*{Snarks, which o6c4c has all edges rich}

There exist other snarks besides Petersen graph, which have all edges rich in o6c4c. One such example is shown in Fig.~\ref{fig:all_rich}.

\begin{figure*}
  \checkoddpage \ifoddpage \forcerectofloat \else \forceversofloat \fi
  \centering
  \includegraphics[width=1.0\linewidth]{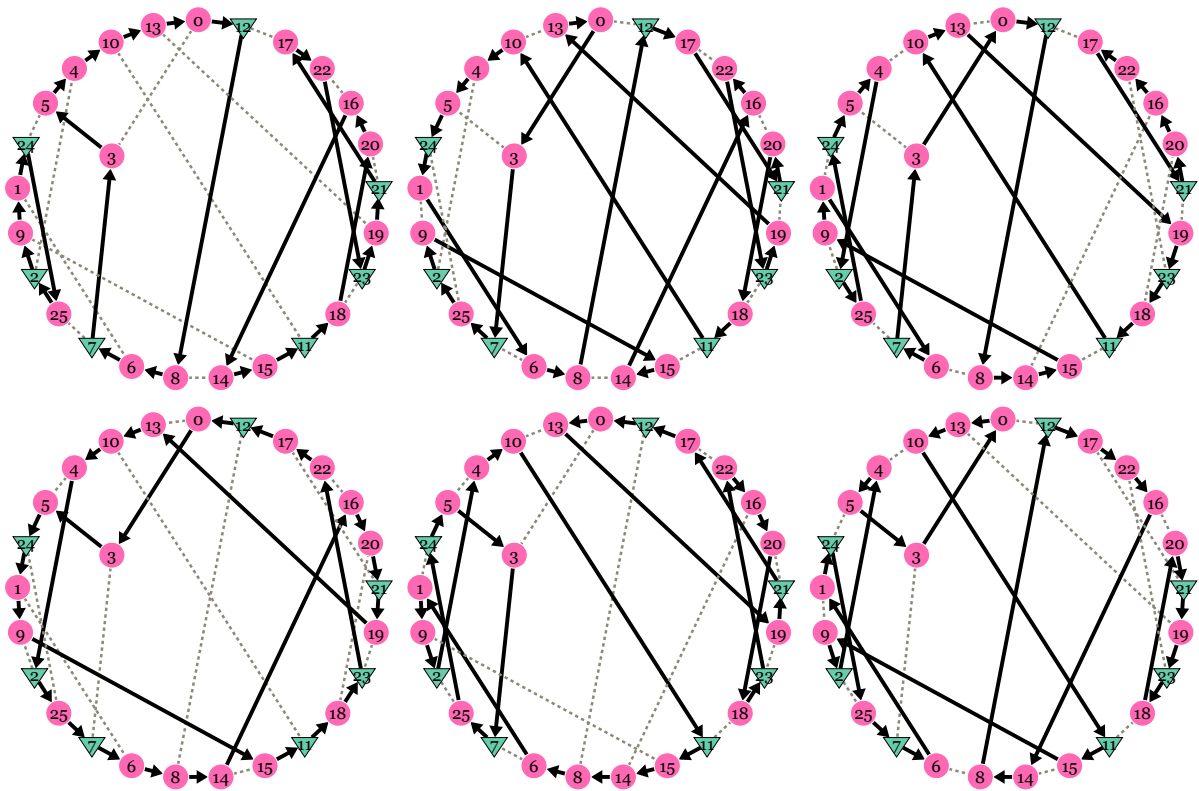}
  \caption{Snark 26.05cyc4-60 without poor edges in o6c4c}
  \label{fig:all_rich}
\end{figure*}

\subsubsection*{Snarks, which o6c4c produces a surface with genus 0}

Although snarks are non-planar, and their ocdc solutions are non-planar either, we can find snarks whose o6c4c surface has genus 0 (after filling up their boundaries), which means that it's possible to draw their o6c4c surface in the plane. The smallest such snarks have 28 vertices, see Fig.~\ref{fig:genus0}.

\begin{figure*}
  \checkoddpage \ifoddpage \forcerectofloat \else \forceversofloat \fi
  \centering
  \includegraphics[width=0.9\linewidth]{}
  \caption{Snark 28.05cyc4-16 whose o6c4c surface has genus~0}
  \label{fig:genus0}
\end{figure*}

\subsection{More observations around o6c4c}

We would like to add some more notes that don't fit the main topic of the paper, but will be explored in \citep{GP1-2}, but we don't have any deadline for the paper, so these notes will live here for quite some time. Various write-ups will be collected in the author's GitHub\sidenote[][-0.5cm]{\url{https://github.com/gexahedron/cycle-double-covers}}.

\begin{restatable}[]{lem}{}
If we have an o6c4c solution, and we get another o6c4c by reorienting some of the circuits, then it has the same parity of the number of ordered vertices.
\end{restatable}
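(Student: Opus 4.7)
My plan is to define a $\mathbb{Z}/2$-valued invariant $\epsilon(v) \in \{0, 1\}$ at each vertex, equal to $1$ exactly when $v$ is disordered, and to show that any reorientation of a set $T$ of circuits which still yields an o6c4c changes $\sum_{v} \epsilon(v)$ by an even amount.

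First I would analyse the local structure at a degree-$3$ vertex $v$. The six visits to $v$ by the circuits of the o6c4c distribute among the three unordered pairs of incident edges $\{e_i, e_j\}$, with exactly two visits per pair. Writing $a_{ij}(v)$ for the number of visits that traverse pair $\{i,j\}$ in direction $e_i \to e_j$, the $2$-in/$2$-out edge balance forces the equations $a_{ij}(v) + a_{ji}(v) = 2$ and $a_{ij}(v) + a_{ik}(v) = 2$, whose solutions are parametrised by $a_{12}(v) \in \{0, 1, 2\}$: the values $0$ and $2$ give the two ordered flow patterns and $1$ gives the disordered pattern. In particular the three pair-counts share a common parity, and I define $\epsilon(v)$ to be that parity, so $\epsilon(v) = 0$ iff $v$ is ordered.

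Next I would compute $\Delta \epsilon(v)$ under reorientation of a set $T$ of circuits. The condition that the flipped collection is still an o6c4c is equivalent to saying that at every edge $e$ the flipped oriented copies of $e$ split equally between the two directions, so $c_T(e) := |\{c \in T : e \in c\}|$ is even. At $v$, letting $n_{ij}(v)$ count visits using pair $\{i,j\}$ whose circuit lies in $T$, a direct calculation gives $\Delta a_{ij}(v) \equiv n_{ij}(v) \pmod 2$. Since the pair-parities of the $a_{ij}(v)$ agree both before and after reorientation, the three $n_{ij}(v)$ also share a common parity. Using that each circuit visits $v$ at most once in a loopless cubic graph, this common parity equals $n(v) := |\{c \in T : v \in c\}| \pmod 2$, so $\Delta \epsilon(v) \equiv n(v) \pmod 2$.

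Summing over vertices then gives
\[
\sum_{v \in V(G)} \Delta \epsilon(v) \;\equiv\; \sum_v n(v) \;=\; \sum_{c \in T} |V(c)| \;=\; \sum_{c \in T} |E(c)| \;=\; \sum_{e \in E(G)} c_T(e) \;\equiv\; 0 \pmod 2,
\]
using $|V(c)| = |E(c)|$ for circuits in a loopless cubic graph and the evenness of each $c_T(e)$. I expect the main subtlety to lie in the first step: the small linear-algebra case analysis of the local $a_{ij}(v)$ system which guarantees that the three pair-parities agree both before and after reorientation, so that $\epsilon(v)$ is well-defined and the congruence $\Delta \epsilon(v) \equiv n(v) \pmod 2$ is independent of which pair one reads off at $v$; after that the remaining steps are short counting arguments.
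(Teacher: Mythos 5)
Your proof is correct and follows essentially the same double-counting argument as the paper: both identify that a vertex flips between ordered and disordered exactly when an odd number of its six visits come from reoriented circuits, and both conclude by counting flipped edge-visits two ways and using that the reorientation flips an even number of the four coverings of each edge. The only difference is cosmetic: you derive the local $\Delta\epsilon(v)\equiv n(v)\pmod 2$ congruence from an explicit $a_{ij}(v)$ parametrisation, whereas the paper simply asserts the case list $\{0,2,3,4,6\}$ with only $3$ causing a switch.
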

\begin{proof}
Let's compare the two o6c4c solutions. Let's say that each o6c4c consists not of edges, but of \textit{edge-covers}, which correspond to 4-coverings of each edge of the original graph.

For each vertex, let's count how many pairs of edge-covers have changed their orientation. We can get one of $\{0, 2, 3, 4, 6\}$ pair changes, and only 3 corresponds to a switch from ordered to disordered vertex (or vice versa). So let's say we have $n_k$ vertices which had $k$ pair changes.

Now let's count how many edge-covers we need to switch to convert from one o6c4c to another. For each edge, we know that it's covered 4 times, twice in one direction, twice in an opposite direction. The oriented sum of flows in each o6c4c produces a zero flow, so in the 4-covering we can switch 0, 2 or 4 edge-covers between the o6c4c solutions. So this is an even number, let's call it $e_{switched} = 2 m$.

The other way to count this number is to sum up $n_k$ values each with weight $2 k$ (2 edge-covers in a pair), then we get $2 e_{switched}$ (each edge-cover appears in 2 pairs), so,

$$
2 (3n_3 + 2 n_2 + 4 n_4 + 6 n_6) = 2 e_{switched} = 4m,
$$

which implies that $n_3$ is an even number, which is what we wanted.

\end{proof}

\textbf{V3 update}: we now also know how to deduce the parity bit of the number of ordered vertices from non-oriented data, specifically from the number of circuits and the signs of the perfect matchings \bothcite{Norine}, check GitHub notes for more info.

\begin{restatable}[]{lem}{}
Number of ordered vertices~$\ne 1$.
\end{restatable}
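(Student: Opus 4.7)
My plan is to argue by contradiction, leveraging the parity lemma that appears immediately above this statement. Suppose for contradiction we have an o6c4c with exactly one ordered vertex $v$. At $v$ the six $2$-factors $C_1, \dots, C_6$ pair up according to the three transitions used, say into $(C_1, C_2)$, $(C_3, C_4)$, $(C_5, C_6)$, each pair traversing a common ordered pair of edges at $v$ in the same direction. Every other vertex is disordered, so at each such vertex all six ordered transitions occur exactly once. The goal is to produce from this data a valid circuit-level reorientation that switches the status of exactly one vertex -- namely $v$, from ordered to disordered -- which would give $n_3 = 1$ in the notation of the previous lemma; since $n_3$ must be even by that lemma, this is the sought contradiction.

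To build such a reorientation, I focus on one pair at $v$, say $(C_1, C_2)$, and consider the integer flow $F = F_{C_1} - F_{C_2}$. On the three edges incident to $v$ this flow vanishes (since $C_1$ and $C_2$ give identical contributions there), but $F$ is a nontrivial integer flow elsewhere, supported exactly on the edges where $C_1$ and $C_2$ differ. Decomposing $F$ into oriented circuits, realized as subcircuits of the cycle-decomposition of $C_1 \cup C_2$, I would use this decomposition to identify a subset $S$ of circuits whose simultaneous reversal preserves the $2+2$ edge-balance condition derived in the proof of the previous lemma, and whose combined effect is to exchange the local transitions of $C_1$ and $C_2$ at $v$ while leaving their transitions elsewhere unchanged. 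A direct check on the constraints of the local configuration classification (only the ordered and disordered types exist) then shows that such a swap turns $v$ from ordered into disordered and affects no other vertex's type.

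The critical and hardest step is verifying the two compatibility requirements at once: first, that the flow $F_{C_1} - F_{C_2}$ can be decomposed into circuits that are already present in the cycle-decompositions of the six $C_i$ (so that the operation really is a valid circuit-level reorientation in the sense of the previous lemma), and second, that the combined reversal preserves disorderedness at every vertex other than $v$. I expect the first part to follow from writing $F$ as an explicit integer cycle-space element whose support lies inside $M_{C_1}\cup M_{C_2}$; the second part should reduce to a finite case-check on what happens to each disordered vertex's transition pattern when we flip a matched pair of circuits through it. Once the local-swap construction is in place, the reoriented o6c4c has exactly zero ordered vertices, so $n_3 = 1$, contradicting the parity constraint of the previous lemma, and hence no o6c4c can have precisely one ordered vertex.
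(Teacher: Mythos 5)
Your plan is genuinely different from the paper's, and unfortunately it has a structural flaw that I don't think can be patched.

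The logical shape you want is: assume one ordered vertex $v$; construct a circuit reorientation of the o6c4c that flips $v$ (and only $v$) from ordered to disordered; this gives $n_3 = 1$, contradicting the parity lemma. But the parity lemma is an unconditional theorem about \emph{all} circuit reorientations of \emph{all} o6c4c solutions: it says no reorientation ever flips an odd number of vertices. So the object you propose to construct is exactly the object the lemma says cannot exist, irrespective of how many ordered vertices the solution has. Your proof would therefore require showing that the assumption of one ordered vertex makes an impossible construction suddenly possible, which is where the entire burden of proof lies, and which you leave as "the critical and hardest step... I expect the first part to follow." That step cannot be completed, because it would contradict the parity lemma itself. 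What you would actually discover upon trying is that every concrete attempt fails, and that failure is not, by itself, a contradiction with the hypothesis of one ordered vertex; it is just the parity lemma reasserting itself.

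There is also a more concrete mechanistic problem. The previous lemma only permits reorienting the \emph{circuits of the o6c4c}, i.e.\ the connected components of the six given cycles $C_1, \dots, C_6$. You instead decompose the flow $F_{C_1} - F_{C_2}$, whose support is the symmetric difference $C_1 \triangle C_2$, into circuits; those circuits live in $C_1 \triangle C_2$ and are not, in general, components of any $C_i$, so they are not legal units of reorientation. Reversing them is not an operation the parity lemma covers, and after such a reversal you would not even have the original six cycles any more.

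The paper's own proof goes a different route that avoids all of this. It uses the $(2,4,4)$-flows decomposition: for any choice of $3$ of the $6$ cycles, the set of edges covered $1$ or $3$ times by that triple is an even subgraph (an $\mathrm{nz}2$-flow cycle), and by a local check at each vertex, this even subgraph passes through an even number of ordered vertices. With exactly one ordered vertex, one can always choose a triple whose even cycle contains it (only $4$ of the $\binom{6}{3} = 20$, i.e.\ $4$ of the $10$ complementary pairs of triples, miss the vertex), giving an odd count and a direct contradiction. That argument never needs to construct a reorientation, only to count. I would suggest abandoning the reorientation strategy and looking instead for an invariant that is forced to be even on some subgraph, as the paper does.
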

\begin{proof}
We can prove this in a slightly indirect way, by using a \textit{(2,4,4)-flows} construction, introduced in paper \bothcite{Hao2009} (and generalized in \bothcite{Chen2015} to non-cubic graphs). The construction says, that if we take any 3 cycles of 6c4c solution, and check the edges that are covered 1 or 3 times, then we get an even cycle (corresponds to a nz2-flow); if we take a subgraph with edges that are covered 0, 1 or 2 times, then we get a nz4-flow subgraph, and same for combination of 0, 2 and 3 times. So what we get is a double cover by 3 subgraphs, having nz2-, nz4- and nz4- flows. And actually vice versa, what they prove is that 6c4c is equivalent to having such a decomposition.

Now, the crucial observation is that this even cycle contains an even number of ordered vertices. If we take the oriented sum of the 3 chosen cycles, and if we try to traverse this even cycle edge by edge, we see that when we pass through disordered vertex, the orientation of edges doesn't change: 1 flows inside, 1 flows outside. If we pass through ordered vertex, we need to switch the orientation of the edges (either both edges flow into the vertex, or they flow out of it).

The last point to make here is that if we could have just 1 ordered vertex in o6c4c, then we could find some triple of cycles (we have 10 different possibilities to form the even cycle, and only 4 of them don't have this vertex), such that this vertex lies on this even cycle from \textit{(2,4,4)-flows}. Which would be a contradiction with that we expect an even number of such vertices on this cycle.
\end{proof}

\subsubsection*{Oriented (2,4,4)-flows}

We know from the papers above that (2,4,4)-flows is equivalent to 6c4c.\sidenote[][]{There is also a related page on the Open Problem Garden website, posted by M.~DeVos \citep{OPG444}, but it is wrong about Petersen graph decomposition.} Two natural questions arise, whether we can orient the (2,4,4)-flows, and whether o6c4c is related to it.

Because nowhere-zero flow introduces some orientation on the edges, we can try to look at the orientations of the edges in the 3 subgraphs that have these nz2-, nz4- and nz4- flows, and check whether each edge is oriented in both ways.

\begin{restatable}[o(2,4,4)-flows]{conj}{}
Every bridgeless (cubic?) graph has an oriented (2,4,4)-flows double cover.
\end{restatable}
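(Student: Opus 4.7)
The plan is to reduce to the cubic case and then promote the Hao--Chen equivalence between 6c4c and unoriented (2,4,4)-flows to an oriented version via a parity-and-sign analysis. Because the statement is marked ``(cubic?)'', I would begin by checking that the standard vertex-splitting reduction (replacing each vertex of degree $\ge 4$ by a cubic tree of added vertices, preserving bridgelessness) carries the property both ways: an oriented (2,4,4)-flows double cover on the cubic expansion contracts to one on the original graph, so it suffices to handle bridgeless cubic $G$.

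For cubic bridgeless $G$, I would work from an o6c4c solution with oriented cycles $C_1, \ldots, C_6$ together with an unordered partition into two triples $T_a, T_b$. For each edge $e$ with a fixed reference orientation, set $k(e) = |\{i \in T_a : e \in C_i\}|$ and $s(e) = \sum_{i \in T_a} \varepsilon_i(e)$, where $\varepsilon_i(e) \in \{-1,0,+1\}$ records the signed direction of $C_i$ on $e$. Define $H_1 = \{e : k(e) \text{ odd}\}$, $H_2 = \{e : k(e) \in \{0,3\}\}$, and $H_3 = \{e : k(e) \in \{1,2\}\}$; by Hao--Chen these form an unoriented (2,4,4)-flows double cover. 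I would orient $H_1$ by the sign of $s(e)$, obtaining an Eulerian orientation and hence a nz2-flow, and orient $H_2, H_3$ by combining $s(e)$ with the complementary sum $s'(e) = \sum_{i \in T_b} \varepsilon_i(e)$ and with the $Z_2 \times Z_2$-flow witness supplied by Hao--Chen. The oriented-double-cover property---each edge covered once in each direction across the three flows---then follows from the defining o6c4c property that each edge is covered twice in each direction by the six cycles.

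The main obstacle has two layers. Locally, I must verify that the chosen signed sums are nowhere-zero on $H_2$ and $H_3$ and map to nonzero classes in $Z_4$; this may fail for an adversarial choice of triple partition, so the correct partition among the $10$ unordered splits of $\{C_1,\ldots,C_6\}$ must be selected. I would attack this with a vertex-local enumeration of the $360$ oriented triples already catalogued in the o6cdc proof of this paper, looking for a parity or averaging argument over the $10$ partitions that forces at least one to be simultaneously nowhere-zero and oppositely oriented on every edge. Globally, to obtain the unconditional statement one would like to bypass the assumed o6c4c input altogether; here I would try to combine Seymour's nowhere-zero 6-flow, the $Z_2^3$-decomposition underlying Jaeger's 8-flow theorem, and the Bermond--Jackson--Jaeger 7-cycle 4-cover to directly manufacture the three oriented flows. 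Whether this combination can be forced without first passing through an o6c4c solution is the genuine crux, and I expect it is where new ideas would be required.
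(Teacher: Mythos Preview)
The statement you are attempting to prove is posed in the paper as an open \emph{conjecture}, not a theorem; the paper offers no proof beyond noting triviality for 3-edge-colourable cubic graphs and computational verification for snarks up to 28 vertices. So there is no ``paper's own proof'' to compare against.

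Your plan has two structural problems. First, it takes an o6c4c solution as input, but the existence of o6c4c is itself Conjecture~1.1 of this paper (the oriented Berge--Fulkerson conjecture). Even a flawless execution of your first two paragraphs would therefore yield only the conditional implication $\text{o6c4c} \Rightarrow \text{o}(2,4,4)$, not a proof of the conjecture. Second, and more damaging, the paper explicitly reports that ``computational experiments don't show any relation between o(2,4,4)-flows and o6c4c.'' This is direct empirical evidence against a uniform construction of the kind you propose: if for every o6c4c some choice among the ten triple partitions produced an oriented $(2,4,4)$-flows double cover, one would expect that to be visible on small snarks. Your local sign-sum recipe for orienting $H_2$ and $H_3$ is also underspecified---you invoke a ``$\mathbb{Z}_2 \times \mathbb{Z}_2$-flow witness'' without saying how it combines with $s(e)$ and $s'(e)$ to yield a nowhere-zero $\mathbb{Z}_4$-orientation compatible with the double-cover condition, and the $360$-triple enumeration you cite from the o6cdc section of the paper applies only to o6c4c solutions with all vertices disordered, not in general.

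You correctly identify in your final paragraph that bypassing o6c4c is ``the genuine crux''; but that is the entire content of the conjecture, and what you offer there (Seymour's 6-flow, Jaeger's 8-flow decomposition, the Bermond--Jackson--Jaeger 7c4c) is a list of ingredients rather than a plan for combining them.
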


The conjecture is trivial for 3-edge-colourable cubic graphs, and we have computationally verified this conjecture for small snarks upto 28 vertices. The solution for Petersen graph is shown in Fig.~\ref{fig:petersen_o244}.

The computational experiments don't show any relation between o(2,4,4)-flows and o6c4c (however it's straightforward to build several nz5-flows from o(2,4,4)-flows, so it's stronger than o6c4c in this sense).

\begin{figure*}
  \checkoddpage \ifoddpage \forcerectofloat \else \forceversofloat \fi
  \centering
  \includegraphics[width=1.0\linewidth]{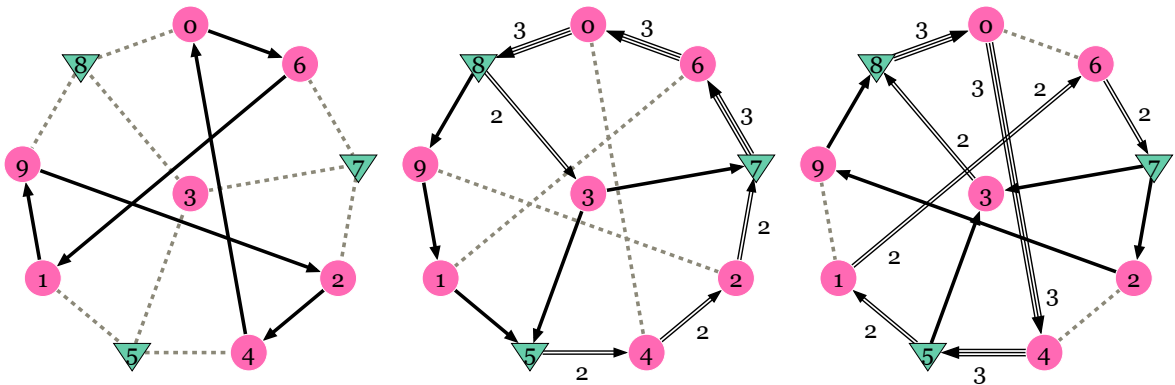}
  \caption{Oriented (2,4,4)-flows for Petersen graph}
  \label{fig:petersen_o244}
\end{figure*}

\subsubsection*{(3,3,3)-flows}

(3,3,3)-flows also seems to be a construction conjecturally related to the \textit{almost strong Petersen colouring}, solution for Petersen graph is shown in Fig.~\ref{fig:petersen_333}. Notice, that ordered vertices correspond precisely to degree 3 vertices in (3,3,3)-flows construction. Not all snarks have an oriented (3,3,3)-flows, but we conjecture that:

\begin{figure*}
  \checkoddpage \ifoddpage \forcerectofloat \else \forceversofloat \fi
  \centering
  \includegraphics[width=1.0\linewidth]{graph10g1-17-333.pdf}
  \caption{(3,3,3)-flows for Petersen graph (actually it's oriented cover)}
  \label{fig:petersen_333}
\end{figure*}

\begin{restatable}[(3,3,3)-flows]{conj}{}
Every bridgeless cubic graph has a (3,3,3)-flows double cover.
\end{restatable}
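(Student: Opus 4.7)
The plan is to split the problem into two regimes. For $3$-edge-colourable cubic graphs the conjecture is immediate: take as the three subgraphs $H_1, H_2, H_3$ the three $2$-factors formed from pairs of colour classes. Each $H_i$ is a disjoint union of even circuits, supports a nz$3$-flow obtained by orienting each circuit consistently and assigning flow value $1$, and together the three $H_i$ double-cover $E(G)$. Thus the substantive work is in the snark case, and I would first reduce to cyclically $4$-edge-connected cubic graphs by the standard reductions across $2$- and $3$-edge-cuts (splitting the cut, solving on the smaller pieces, and recombining), verifying as a base case that small building blocks admit compatible $(3,3,3)$-decompositions.

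The main idea for the reduced case is to exploit the link the paper hints at between $(3,3,3)$-flows and almost strong Petersen colourings. Given such a colouring $\varphi : E(G) \to E(P)$, I would pull back the $(3,3,3)$-flows double cover of the Petersen graph exhibited in Fig.~\ref{fig:petersen_333}. The crucial local check is at each vertex $v$ of $G$: ordered vertices should map to the degree-$3$ vertices of the Petersen $(3,3,3)$-decomposition and disordered vertices to the degree-$2$ vertices, which is exactly the correspondence the paper observes. Verifying that the pullback of each Petersen nz$3$-flow is again a nz$3$-flow then reduces to a $\mathbb{Z}_3$-boundary calculation at each vertex, which is forced by the local colouring structure together with the $(3,3,3)$-type constraint
\begin{equation*}
  d_{H_1}(v) + d_{H_2}(v) + d_{H_3}(v) = 6, \qquad d_{H_i}(v) \in \{0,2,3\}.
\end{equation*}

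There are two substantial obstacles. First, almost strong Petersen colourings are not known to exist for every bridgeless cubic graph---the paper records that only about $48\%$ of snarks up to $28$ vertices admit one. A purely colouring-based argument therefore cannot be complete, and I would try to close this gap either by weakening what one needs from the colouring (keeping only the $(3,3,3)$-flow implication) or by combining the approach with a Seymour-style modular flow argument that promotes a $(2,4,4)$-decomposition into a $(3,3,3)$-decomposition by reassigning edges between the nz$2$-piece and the two nz$4$-pieces. Second, nz$3$-flows are notoriously rigid---the $3$-flow conjecture itself is unresolved at the cubic level of connectivity---so even granting the colouring, upgrading a balanced $\mathbb{Z}_3$-boundary to a genuine nowhere-zero flow is not automatic, and this is where I expect the decisive technical difficulty to lie. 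My fallback is to formulate the global consistency condition as a $\mathbb{Z}_3$-tension problem on an auxiliary hypergraph built from the o$6$c$4$c data, and to solve it by a Hall-type matching argument applied simultaneously to the three pieces.
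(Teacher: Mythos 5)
This statement is posed in the paper as a \emph{conjecture}, not a theorem: the paper offers no proof, only the observation that the claim is trivial for $3$-edge-colourable cubic graphs (exactly the reduction you make in your first paragraph, via the three $2$-factors) and a report of computational verification for snarks with at most $28$ vertices. There is therefore no proof in the paper against which to compare your approach; what you have written is a research plan, and you yourself identify the places where it fails to close.

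Those gaps are genuine and would defeat the argument as written. The central construction---pulling back the Petersen $(3,3,3)$-decomposition through an almost strong Petersen colouring---applies only to graphs that admit such a colouring, and the paper's own data show that only about $48\%$ of snarks up to $28$ vertices do; reducing to cyclically $4$-edge-connected graphs does not repair this, since the missing $52\%$ are not eliminated by that reduction. Worse, the paper explicitly notes that almost strong Petersen colourings do \emph{not} preserve the orientation of the Petersen $(3,3,3)$-flows and that some snarks have no oriented $(3,3,3)$-flows at all, so even where the colouring exists the pullback is not the clean local transfer your sketch assumes. Finally, the two fallbacks you propose---a Seymour-style modular reassignment from a $(2,4,4)$- to a $(3,3,3)$-decomposition, and a Hall-type matching on a $\mathbb{Z}_3$-tension hypergraph---are named tools rather than arguments: neither is developed to the point where one could check that the resulting boundary-balanced assignment is actually nowhere-zero on all three pieces simultaneously, and that is precisely where the difficulty lies. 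In short, this is an open conjecture, and your proposal does not yet contain a proof of it.
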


As in the case of o(2,4,4)-flows, the conjecture is trivial for 3-edge-colourable cubic graphs, and we have computationally verified this conjecture for small snarks upto 28 vertices.

\textbf{V3 update}: we have also studied a stronger notion of $(0,0,1,2,4,8)/3$-nz5-flows\sidenote[][-1.0cm]{Check our slides from Cycles and Colourings 2025 workshop: \url{https://www.researchgate.net/publication/397016716_Slides_Oriented_Berge-Fulkerson_conjecture_and_Unit_vector_flows_on_S2}}, which means that we can sum up the 6 layers as 2-flows, with specified weighting, and get a nowhere-zero 5-flow.

\begin{restatable}[]{thm}{}
o6c4c with $(0,0,1,2,4,8)/3$-nz5-flow implies $(3,3,3)$-flows.
\end{restatable}

\begin{restatable}[]{thm}{}
o6c4c with $(0,0,1,2,-4,-8)/3$-nz5-flow implies $(0,0,1,2,4,8)/3$-nz5-flow (and many other flows as well).
\end{restatable}

\begin{restatable}[]{thm}{}
o6c4c with $(0,0,1,2,-4,-8)/3$-nz5-flow implies $(3,3)$-flow-ppc (see \bothcite{XieCQZhang}), which implies 5cdc.
\end{restatable}

So now we have another relation between o6c4c and (non-oriented) 5cdc, not based directly on Petersen colouring constructions.

\subsubsection*{Euler formula}

Because we have an oriented surface, we can calculate the Euler formula and try to get some useful connections from it.

One interesting case here is when we have a cubic graph $G$ with o6c4c with all disordered vertices, as in Fig.~\ref{fig:all_disordered}.

\begin{restatable}{lem}{}
If we have an o6c4c solution with all disordered vertices, then the number of rich edges has the same parity as the number of circuits in o6c4c.
\end{restatable}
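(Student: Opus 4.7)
The natural approach is to invoke Euler's formula on the closed orientable surface $S$ produced from the o6c4c in the earlier section on gluing. First I would assemble the three basic cardinalities. Let $r$ and $p$ denote the numbers of rich and poor edges of $G$, so $r+p=|E(G)|=3|V(G)|/2$. By the construction of $S$, each rich edge of $G$ becomes a single $4$-valent vertex of $S$, while each poor edge becomes two $2$-valent vertices, so $V(S)=r+2p$. A short local computation at each vertex of $G$ (the three pair-multiplicities $x_{12},x_{13},x_{23}$ satisfy $x_{12}+x_{13}=x_{12}+x_{23}=x_{13}+x_{23}=4$, forcing each to equal $2$) shows that every one of the three pairs of edges at every vertex is traversed exactly twice, and by the all-disordered hypothesis these two traversals go in opposite directions. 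Each such pair contributes a single edge of $S$, so $E(S)=3|V(G)|$. Finally $F(S)$ is, by the construction, the total number $c$ of circuits in the o6c4c.

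Next I would feed these three numbers into Euler's formula. Since $S$ is closed and orientable, possibly with $k$ connected components of genera $g_1,\ldots,g_k$,
$$V(S)-E(S)+F(S) \;=\; 2k - 2\sum_{i}g_i,$$
which is an even integer. Substituting,
$$(r+2p) - 3|V(G)| + c \;\equiv\; 0 \pmod{2}.$$
Because $G$ is cubic, $|V(G)|$ is even and hence $3|V(G)|$ is even; $2p$ is manifestly even; what remains is $r + c \equiv 0 \pmod 2$, which is exactly the desired parity statement.

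The only step with any real content is the count $E(S)=3|V(G)|$; everything else is parity bookkeeping. Possible disconnectedness of $S$ (which already occurs for circuits made entirely of poor edges) causes no trouble, since the Euler characteristic of any closed orientable surface is even. I expect this to be a routine verification rather than a genuine obstacle, so the lemma should follow cleanly from the surface construction together with Euler's formula modulo $2$.
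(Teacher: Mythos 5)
Your proof is correct and takes essentially the same approach as the paper: compute $V(S)=r+2p$, $E(S)=3|V(G)|=2|E(G)|$, $F(S)=c$, substitute into Euler's formula, and read off the parity. You are slightly more careful than the paper's terse statement in that you explicitly allow $S$ to be disconnected and sum Euler characteristics over components, and you spell out the local count $x_{12}=x_{13}=x_{23}=2$ at each vertex, but these are just fleshing out steps the paper leaves implicit.
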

\begin{proof}
Let's apply the Euler formula for surface graph $S$:

$$
V_S - E_S + F_S = 2 - 2 g_S.
$$

It's also easy to show that $V_S$ equals $rich + 2 \cdot poor$. $E_S$ equals the number of pairs of consecutive edges, which equals $3 V_G = 2 E_G$. $F_S$ is just the number of circuits in o6c4c. Finally, if we apply the formula, we will get the statement of the lemma.
\end{proof}

\subsubsection*{Orienting the perfect matching}

We could also try to orient the perfect matching itself, even though we haven't succeeded in this endeavour, to come up with something natural looking. Although, when 6c4c splits into two 6-cycle double covers, and when we don't have perfect matching edges connecting circuits of same 6cdc, one could argue, that we can orient the perfect matching edges from circuits of one 6cdc to circuits of another 6cdc, but currently we don't know how to extract anything useful out of this idea, or how to extend this notion to other 6c4c solutions.

\subsubsection*{More theorems}

\textbf{V3 update}: some of these statements have been previously mentioned as conjectures, but now we have some proofs and write-ups in GitHub.

 \begin{restatable}[]{thm}{}
If we have an o6c4c solution with all disordered vertices, then in each perfect matching we have an even number of rich edges.
\end{restatable}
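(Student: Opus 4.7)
The plan is to reformulate richness in terms of a vertex-labeling and then attempt a parity argument via the $(2,4,4)$-flows. First, at each (disordered) vertex $u$ I define the \emph{cycle-matching} $\mu(u)$: the perfect matching of the six o6c4c cycles obtained by pairing the two cycles whose transits at $u$ use the same unordered pair of incident edges (in opposite directions, by disorderedness). Equivalently, the pair of $\mu(u)$ associated with an edge $e$ at $u$ consists of the two cycles not containing $e$. A short case analysis then shows that an edge $e = uv$ is rich iff $\mu(u) \neq \mu(v)$: the two matchings share the pair of $e$, and on the remaining four cycles disorderedness at $v$ forbids the orientation matching $\pi_e$ (which would pair two cycles orienting $e$ the same way at $v$), leaving exactly two permissible choices for $\mu(v)|_4$, corresponding to poor ($\mu(v)=\mu(u)$) and rich.

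Fix a cycle $C_k$ and let $\sigma(u)$ be the partner of $k$ in $\mu(u)$. This partitions $V(G) = \bigsqcup_{j \neq k} V_j$ with $V_j = \sigma^{-1}(j)$, and $M_k = \bigsqcup_{j \neq k} G_{kj}$ where $G_{kj}$ consists of edges with pair $\{k,j\}$ and is a perfect matching of $V_j$. Writing $R_{kj}$ for the rich edges in $G_{kj}$, one has $|R_k| = \sum_{j} |R_{kj}|$.

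To reach the parity, I would apply the $(2,4,4)$-flows of \bothcite{Hao2009} to every triple $T$ of cycles: the nz2-subgraph $H_T$ is an even subgraph of $G$, and a direct count shows $H_T \cap M_k = G_{ki} \cup G_{kj}$ for a specific pair $\{i,j\}$ determined by $T$ and $k$ (every such pair arises as $T$ varies). The key step would be to prove $|R \cap H_T \cap M_k| \equiv 0 \pmod 2$ for every $T$, which would give $|R_{ki}| + |R_{kj}| \equiv 0 \pmod 2$ for every pair $\{i,j\}$ and thereby force all $|R_{kj}|$ to share a common parity $p$ (global, since running the same argument with $k$ replaced by any $j$ equates $p_k = p_j$). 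Then $|R_k| \equiv 5p \equiv p \pmod 2$, and ruling out $p=1$ separately---for instance by combining the preceding lemma $|R| \equiv F \pmod 2$ with a circuit-parity observation on the o6c4c surface---would close the proof.

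The main obstacle is the parity claim $|R \cap H_T \cap M_k| \equiv 0 \pmod 2$. It parallels the ``even number of ordered vertices on $H_T$'' lemma, but richness is an edge invariant depending on both endpoints' cycle-matchings and admits no immediate local vertex-sum on $H_T$ that reads it off. My intended attack is to traverse each circuit of $H_T$ and track how the oriented sum of cycles in $T$ interacts with the transitions of $\mu$ along the walk, hoping to show that a rich $M_k$-edge crossing sub-classes of some $V_j$ forces a compensating crossing elsewhere. If that combinatorial approach stalls, the fallback would be an Euler-characteristic argument on an auxiliary orientable sub-surface assembled from the $C_k$- and $H_T$-faces of the surface constructed in Section~3, aiming to extract the same parity topologically.
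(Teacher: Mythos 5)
The statement you are trying to prove is posed in the paper as an \emph{open conjecture}, in the section the author prefaces with ``statements\ldots which we can't prove at the moment'' and supports only by computation on small snarks. There is no proof in the paper to compare against, so I assess the proposal on its own.

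The groundwork you lay is correct and genuinely illuminating. At a cubic vertex in a 6c4c each unordered pair of incident edges is used by exactly two of the six cycles, so the cycle-matching $\mu(u)$ is a well-defined perfect matching on the six cycle labels; the pair of $\mu(u)$ associated to an incident edge $e$ is the two cycles avoiding $e$; disorderedness at $u$ says exactly that $\mu(u)$ avoids the same-orientation matching $\pi_e$ for each $e\ni u$; and $e=uv$ is rich iff $\mu(u)\neq\mu(v)$ (this last fact does not even need disorderedness). The decomposition $M_k=\bigsqcup_{j\neq k}G_{kj}$ with $G_{kj}$ a perfect matching of $V_j=\sigma^{-1}(j)$, and the identity $H_T\cap M_k = G_{ki}\cup G_{kj}$ with every pair $\{i,j\}$ realized by some triple $T$, are also correct.

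However, you stop exactly where the work begins. The load-bearing claim $|R\cap H_T\cap M_k|\equiv 0 \pmod 2$ is not established; you only sketch an intended attack and flag the obstacle yourself. The analogy with the paper's ``even number of ordered vertices on $H_T$'' lemma does not carry over: there the parity falls out of a local flip/no-flip observation as one walks along $H_T$, whereas the edges of $M_k$ do not lie on $H_T$ at all --- they hang off it as matching edges of $V_i\cup V_j$ --- and richness depends on $\mu$ at both endpoints. Nothing in the sketch supplies the missing cancellation.

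There is also a second, quieter gap. Granting a common parity $p$ for all $|R_{kj}|$, you get $|R_k|\equiv 5p\equiv p$ and $|R|\equiv 15p\equiv p$; combined with the paper's lemma $|R|\equiv F\pmod 2$, ruling out $p=1$ is equivalent to showing that the number of o6c4c circuits $F$ is even whenever all vertices are disordered. That is itself an unproved parity statement (the paper conjectures an even circuit count only in the ``split into two 6cdcs'' setting, not the all-disordered one). As written, the proposal reduces one open conjecture to two other unproved parity claims rather than closing it.
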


\begin{restatable}[]{thm}{}
If we have a o6c4c solution which splits into two 6-cycle double covers, then:
\begin{itemize}
\item we have an even number of circuits;
\item we have an even number of \textit{drd} edges (which is same as saying that the sum of numbers of ordered vertices and rich edges is even).
\end{itemize}
\end{restatable}

\subsubsection*{More conjectures}

While doing the computational experiments around o6c4c, we have found much more statements, which hold for small snarks (e.~g. in most cases we checked snarks with at least upto 28 and sometimes 30 vertices), which sound quite easy to state, but which we can't prove at the moment.

\begin{restatable}[]{conj}{}
If we have a (non-oriented!) 6c4c solution which splits into two 6-cycle double covers, then each of the 2-factors has an even number of rich edges.
\end{restatable}

\begin{restatable}[]{conj}{}
If we have an o6c4c which has a "corresponding" $(3,3,3)$-flows (check GitHub notes for more info), then the number of circuits is even.
\end{restatable}

\subsection{6-covers} 

One may also be wondering, what is the situation with 6-covers? To the best of our knowledge, here's the current picture:

\begin{itemize}
\item 3-edge-colourable cubic graphs trivially have a 9c6c and o9c6c.
\item Petersen graph doesn't have a 9c6c solution! This is proven by P.~Seymour in \bothcite{Seymour1979multi}. Originally, when we were thinking about this, we didn't know of the Seymour's proof, and came up with alternative way to prove this fact. Any solution for 9c6c consists of 9 layers, which are complementary to perfect matchings; if we go the complementary problem then it says that we need to cover the graph with 9 perfect matchings which cover the graph 3 times. We have 6 different perfect matchings for Petersen graph, and each edge lies in exactly 2 of them (and each pair of perfect matchings has exactly 1 edge in common). We have at least 2 perfect matchings, each of which we need to use at least 2 times, so we will have some edges which are covered at least 4 times.
\item An easy way to build a 10c6c for Petersen graph is just by taking all of its 9-cycles. Existence of 10c6c is proven for all bridgeless graphs, by G.~Fan in \bothcite[-1cm]{Fan1992}.
\item M.~DeVos and L.~Goddyn have observed that P.~Seymour's 6-flow theorem can be used to construct an o11c6c for every bridgeless graph \citep{OPGMNCC}.
\item o10c6c solutions also exist for Petersen graph, although this particular set of cycles doesn't work (also easy to verify manually), so each o10c6c solution contains at least one 2-factor.
\item o9c6c exists for all small snarks (except for Petersen graph) with upto 26 vertices (and we also checked some bigger snarks, e.~g., treelike/windmill snark on 34 vertices, snarks with circular flow number 5 on 34 vertices, cyclically 5-edge-connected permutation graphs on 34 vertices, oddness 4 snarks with 44 vertices, without finding a counterexample).
\item It also seems possible to introduce notions of \textit{poor} and \textit{rich} edges, as well as \textit{ordered} and \textit{disordered} vertices, but we haven't found anything interesting stuff here for now, as well as we don't know, whether it's possible to glue any kind of surface out of the o9c6c circuits.
\end{itemize}

\bibliography{gp1-1}
\bibliographystyle{alpha}

\end{document}